\def\sD{{\mathfrak D}}      
   \def\sH{{\mathfrak H}}   
   \def\sK{{\mathfrak K}}   \def\sL{{\mathfrak L}}
\def\sM{{\mathfrak M}}
\def\st{{\mathfrak t}}
\def\s{{\rm s}}
      \def\dC{{\mathbb C}}
\def\bm\chi{\mbox{\boldmath$\chi$}}
\def\half{{\frac{1}{2}}}
\def\ker{{\rm ker\,}}
\def\ran{{\rm ran\,}}
\def\cran{{\rm \overline{ran}\,}}
\def\dom{{\rm dom\,}}
\def\mul{{\rm mul\,}}
\def\cdom{{\rm \overline{dom}\,}}
\def\clos{{\rm clos\,}}
\def\dim{{\rm dim\,}}
\let\xker=\ker \def\ker{{\xker\,}}
\DeclareMathOperator{\hplus}{\, \widehat + \,}
\DeclareMathOperator{\hoplus}{\, \widehat \oplus \,}
\DeclareMathOperator{\hominus}{\, \widehat \ominus \,}
\def\ran{{\rm ran\,}}
\def\dom{{\rm dom\,}}
\def\dim{{\rm dim\,}}
\def\half{{1 \over 2}}
\newtheorem{theorem}{Theorem}[section]
\newtheorem{proposition}[theorem]{Proposition}
\newtheorem{corollary}[theorem]{Corollary}
\newtheorem{lemma}[theorem]{Lemma}
\newtheorem{definition}[theorem]{Definition}
\theoremstyle{definition}
\numberwithin{equation}{section}
\title[Factorization, majorization,  and domination]
{Factorization, majorization, and domination \\ for linear relations}
\author{Seppo Hassi}
\author{Henk~de~Snoo}
\address{Department of Mathematics and Statistics \\
University of Vaasa \\
P.O. Box 700, 65101 Vaasa \\
Finland}
\email{sha@uwasa.fi}
\address{Johann Bernoulli Institute for Mathematics and Computer Science \\
University of Groningen \\
P.O. Box 407, 9700 AK Groningen \\
Nederland}
\email{desnoo@math.rug.nl}
\dedicatory{To our friend Zolt\'an Sebesty\'en on the occasion of his 70th birthday}
\begin{document}

\begin{abstract}
Let $\sH_A$, $\sH_B$, and $\sH$ be Hilbert spaces. Let $A$ be a
linear relation from $\sH$ to $\sH_A$ and  let $B$ be a linear
relation from $\sH$ to $\sH_B$. If there exists an operator $Z \in
\mathbf{B}(\sH_B, \sH_A)$ such that $ZB \subset A$, then $B$ is said
to dominate $A$. This notion plays a major role in the theory of
Lebesgue type decompositions of linear relations and operators.
There is a strong connection to the majorization and factorization
in the well-known lemma of Douglas, when put in the context of
linear relations. In this note some aspects of the lemma of Douglas
are discussed in the context of linear relations and the connections
with the notion of domination will be treated.
\end{abstract}

\maketitle

\section{Introduction}

Let $A$ and $B$ be a pair of linear relations with their domains of
definition in the same Hilbert space $\sH$ and their ranges in the
Hilbert spaces $\sH_A$ and $\sH_B$, respectively. The relation $B$
is said to dominate the relation $A$ if there exists a bounded
linear operator $Z$ from $\sH_B$ to $\sH_A$ such that $ZB \subset
A$. Domination is preserved when the closures of $A$ and $B$ are
considered. In the particular case that $A$ and $B$ are, not
necessarily densily defined, operators this is equivalent to $\dom A
\subset \dom B$ and the existence of a constant $c\ge 0$ such that
$\|Af\| \le c \|Bf\|$ holds for all $f \in \dom A$. The notion of
domination, which is familiar from measure theory, plays an
important role in the theory of Lebesgue type decompositions. This
notion and its role in Lebesgue type decompositions for a pair of
bounded operators go back to Ando \cite{Ando}; it has a similar
position when decomposing a nonnegative form with respect to an
another nonnegative form, see \cite{HSS1}, or when decomposing an
unbounded operator or a linear relation \cite{HSS2,HSSSz2007,HSSz0},
where some further history and references can be found.

In the present paper it will be shown that domination is closely related to
the following well-known lemma of R.G. Douglas \cite{D} when that
lemma is put in the context of unbounded linear operators or,
more generally,  linear relations.

\begin{lemma}[Douglas]\label{Douglas}
Let $A, B \in \mathbf{B}(\sH,\sK)$,  the bounded 
everywhere defined linear operators from
a Hilbert space $\sH$ to a Hilbert space $\sK$. Then the following
statements are equivalent:
\begin{enumerate}\def\labelenumi{\rm (\roman{enumi})}
\item $\ran A \subset \ran B$;
\item $A=BW$ for some bounded linear operator $W \in \mathbf{B}(\sH)$;
\item $AA^{*} \le \lambda BB^{*}$ for some $\lambda \ge 0$.
\end{enumerate}
\end{lemma}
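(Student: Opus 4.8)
The plan is to prove the chain of implications (ii)~$\Rightarrow$~(i), (ii)~$\Rightarrow$~(iii), (iii)~$\Rightarrow$~(ii), and (i)~$\Rightarrow$~(ii); together these yield the equivalence of all three statements, since the first two give (ii)~$\Rightarrow$~(i) and (ii)~$\Rightarrow$~(iii), the third completes (ii)~$\Leftrightarrow$~(iii), and the last completes (i)~$\Leftrightarrow$~(ii). The two implications out of (ii) are immediate. If $A = BW$, then $\ran A = \ran(BW) \subset \ran B$, which is (i). Taking adjoints gives $A^{*} = W^{*} B^{*}$, so $AA^{*} = B\, W W^{*} B^{*} \le \|W\|^{2}\, BB^{*}$, where the estimate uses $WW^{*} \le \|W\|^{2} I$; this is (iii) with $\la = \|W\|^{2}$.

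The heart of the matter is (iii)~$\Rightarrow$~(ii). Assuming $AA^{*} \le \la BB^{*}$, I would rewrite this as $\|A^{*} h\|^{2} \le \la \|B^{*} h\|^{2}$ for every $h \in \sK$. This inequality shows first that $B^{*} h = 0$ forces $A^{*} h = 0$, so the assignment $B^{*} h \mapsto A^{*} h$ defines a map $W_{0}$ on $\ran B^{*}$ without ambiguity, and second that $W_{0}$ is bounded there with $\|W_{0}\| \le \la^{1/2}$. Extend $W_{0}$ by continuity to the closure $\overline{\ran B^{*}}$ and by zero on its orthogonal complement $\ker B$, obtaining $W_{0} \in \mathbf{B}(\sH)$ satisfying $W_{0} B^{*} = A^{*}$ as an operator identity. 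Passing to adjoints yields $B W_{0}^{*} = A$, so $W := W_{0}^{*}$ is the required bounded operator.

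For (i)~$\Rightarrow$~(ii) I would argue by the closed graph theorem. Given $\ran A \subset \ran B$, for each $x \in \sH$ the equation $Bu = Ax$ is solvable; let $Wx$ denote its unique solution lying in $(\ker B)^{\perp} = \overline{\ran B^{*}}$, that is, the solution of minimal norm. Uniqueness makes $W$ linear, and $BW = A$ by construction. To see that $W$ is bounded, suppose $x_{n} \to x$ and $Wx_{n} \to z$; then $B W x_{n} = A x_{n} \to Ax$ by continuity of $A$, while $B W x_{n} \to Bz$ by continuity of $B$, so $Bz = Ax$, and since $(\ker B)^{\perp}$ is closed one has $z \in (\ker B)^{\perp}$. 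By the uniqueness of the minimal solution $z = Wx$, so the graph of $W$ is closed and $W \in \mathbf{B}(\sH)$.

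The main obstacle is the construction in (iii)~$\Rightarrow$~(ii): one must verify carefully that the single norm inequality simultaneously guarantees that the assignment $B^{*}h \mapsto A^{*}h$ is unambiguous (the kernel condition $B^{*}h = 0 \Rightarrow A^{*}h = 0$) and that the uniform bound $\la^{1/2}$ holds, and then handle the density and extension step so that $W_{0}$ is genuinely everywhere defined before the adjoint is taken. The analogous delicate point in (i)~$\Rightarrow$~(ii) is the need to select the canonical minimal-norm solution $Wx \in \overline{\ran B^{*}}$ in order to obtain linearity; without this normalization the closed graph argument would not apply.
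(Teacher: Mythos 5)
The paper itself offers no proof of this lemma: it is quoted verbatim from Douglas \cite{D} and used as motivation, with the paper's own work devoted to the relation-theoretic generalizations (Proposition \ref{thm1}, Theorem \ref{een}, Theorem \ref{een+}). Your argument is correct and complete, and it is essentially the classical proof. It is still worth comparing it with what the paper does prove. Your step (iii) $\Rightarrow$ (ii) --- defining $W_0$ on $\ran B^*$ by $W_0B^*h=A^*h$, checking well-definedness and the bound $\|W_0\|\le\la^{1/2}$ from the single inequality $\|A^*h\|^2\le\la\|B^*h\|^2$, extending by continuity and by zero on $\ker B$, and then taking adjoints to get $A=BW_0^*$ --- is exactly the mechanism of the paper's Theorem \ref{een}, where the same operator is called $D$ and is built from the operator parts $(A_\s)^\times$, $(B_\s)^\times$ because $A$ and $B$ there are unbounded closed relations. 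Your step (i) $\Rightarrow$ (ii), by contrast, is the one place where boundedness is genuinely exploited: the minimal-norm selection $Wx\in(\ker B)^\perp$ plus the closed graph theorem produces a \emph{bounded} $W$ from the bare range inclusion. The paper's analogue, Proposition \ref{thm1}, replaces this by the purely algebraic factorization $A\subset BW$ with the \emph{relation} $W=B^{-1}A$, which needs no topology but surrenders the boundedness of $W$; this is precisely why condition (i) disappears from the unbounded versions in Theorems \ref{een} and \ref{een+}. Two small points to keep explicit if you write this up: in (i) $\Rightarrow$ (ii) you should note that $W(x+y)$ and $Wx+Wy$ both solve $Bu=A(x+y)$ and both lie in the subspace $(\ker B)^\perp$, so uniqueness gives linearity (you assert this but it deserves a sentence); and in (iii) $\Rightarrow$ (ii) the identity $(W_0B^*)^*=BW_0^{\,*}$ is the bounded-operator case of the paper's \eqref{Bbadj}, which is where everywhere-definedness of $W_0$ is used.
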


If the equivalent conditions (i) -- (iii) hold,
then there is a unique operator $W$ such that
\begin{enumerate}\def\labelenumi{\rm (\alph{enumi})}
\item $\|W\|^2 =\inf\{\mu :\,AA^*\leq\mu BB^*\}$;
\item $\ker A = \ker W$;
\item $\ran W \subset \cran B^*$.
\end{enumerate}
In the literature one can find a statement which is equivalent to
the three items in Lemma~\ref{Douglas}, namely
\begin{enumerate}\def\labelenumi{\rm (\roman{enumi})}\setcounter{enumi}{3}
\item $AA^{*}=BMB^{*}$, where $M \in \mathbf{B}(\sH)$ is nonnegative
and $\|M\| \le \lambda$.
\end{enumerate}
One may take $\ran M \subset \cran B^{*}$.
In addition to the results in the above lemma
Douglas indicated some further results for the
case when $A$ and $B$ are densely defined closed linear operators;
see \cite{D}.
Various extensions of these basic results by Douglas can be found in
the literature; see, for instance, \cite{BB,E,F}. The factorization
aspect of the Douglas lemma was recently put in the context of
linear relations by D. Popovici and Z. Sebesty\'en \cite{PS}; see also
some refinements by A. Sandovici and Z. Sebesty\'en \cite{SS}.
For the majorization aspect of the Douglas lemma, see \cite{AH}.

The contents of the present paper are now briefly explained.
For closed linear operators or relations $A$ and $B$  
the following equivalence will be established in Theorem \ref{een}:
\[
 A \subset BW \quad \Leftrightarrow \quad AA^{*} \le c^{2} BB^{*},  
\]
where $W$ is a bounded linear operator and $c \ge 0$, in fact $\|W\| \le c$.  
This result characterizes  majorization in terms of a simple factorization type inclusion.
Domination for a pair of closed 
linear operators or relations  can be characterized in a similar way:  
\[
ZB \subset A \quad \Leftrightarrow \quad A^{*}A \le c^{2} B^{*}B,
\] 
see Theorem \ref{een+}. Some consequences of these results
 will be explored in Section~3 and Section~4.
In particular, a characterization of the equalities $A=BW$ and $ZB=A$ 
is given. For bounded linear operators   the factorization 
$A=BW$ in the original Douglas lemma can be directly connected to 
the notion of domination for linear relations by means of the following observation:
\[
A=BW \quad \Leftrightarrow \quad WA^{-1} \subset B^{-1},
\]
see Lemma \ref{douglasfact}.
This last equivalence, when combined with the two earlier equivalences, 
provides a simple proof for the characterization of the ordering of 
nonnegative selfadjoint relations in terms of resolvents; see Theorem \ref{twee}.
For the convenience of the reader some results concerning
closed nonnegative forms and associated linear relations will be
recalled in Section 2.

\section{preliminaries}

Let  $H$ be a linear relation from a Hilbert space $\sH$ to a
Hilbert space $\sK$; i.e., $H$ is a linear subspace of the product
$\sH \times \sK$. The domain, range, kernel, and multivalued part of
$H$ are denoted by $\dom H$, $\ran H$, $\ker H$, and $\mul H$. The
formal inverse $H^{-1}$ of $H$ is a relation from $\sK$ to $\sH$,
defined by $H^{-1}=\{ \{f',f\}:\, \{f,f'\} \in H\}$, so that $\dom
H^{-1}=\ran H$, $\ran H^{-1}=\dom H$, $\ker H^{-1}=\mul H$, and
$\mul H^{-1}=\ker H$. For $\sL\subset\sH$ the set $H(\sL)$ is a
subset of $\sK$ defined by
\[
 H(\sL)=\{\,h':\, \{h,h'\}\in H \text{ for some } h\in\sL\,\}.
\]
In particular, $H(\{0\})=\mul H$.

Let $H_1$ and $H_2$ be relations from a Hilbert space $\sH$ to a
Hilbert space $\sK$. Then $H_1$ is a  \textit{restriction} of $H_2$
and $H_2$ is an \textit{extension} of $H_1$ if $H_1 \subset H_2$.

\begin{proposition}\label{arens0}
Let $H_1$ and $H_2$ be relations from a Hilbert space $\sH$ to a
Hilbert space $\sK$ and assume that $H_1 \subset H_2$. Then the
following statements are equivalent:
\begin{enumerate}\def\labelenumi{\rm (\roman{enumi})}

\item $\dom H_1=\dom H_2$;

\item $H_2=H_1 \hplus ( \{0\} \times \mul H_2)$.

\end{enumerate}
Moreover, the following statements are equivalent:
\begin{enumerate}\def\labelenumi{\rm (\roman{enumi})}
\setcounter{enumi}{2}

\item $\ran H_1 =\ran H_2$;

\item $H_2=H_1 \hplus ( \ker H_2 \times \{0\})$.

\end{enumerate}
\end{proposition}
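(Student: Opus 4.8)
The plan is to prove the two equivalences in turn, deriving the second from the first by passing to inverse relations. Throughout I use that $\mul H_2 = H_2(\{0\})$, so that $\{0\} \times \mul H_2 \subset H_2$, together with the hypothesis $H_1 \subset H_2$. Since $H_2$ is a linear subspace, these two inclusions give $H_1 \hplus (\{0\} \times \mul H_2) \subset H_2$ at once, which settles one inclusion in (ii). The implication (ii) $\Rightarrow$ (i) is then immediate: every element of $H_1 \hplus (\{0\} \times \mul H_2)$ has the form $\{f,\,g+m\}$ with $\{f,g\} \in H_1$ and $m \in \mul H_2$, so its first component ranges over exactly $\dom H_1$, whence $\dom H_2 = \dom H_1$.

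For the reverse implication (i) $\Rightarrow$ (ii) I would take an arbitrary $\{f,g\} \in H_2$ and invoke $\dom H_2 = \dom H_1$ to select $g'$ with $\{f,g'\} \in H_1$. Subtracting yields $\{0,\,g-g'\} = \{f,g\} - \{f,g'\} \in H_2$, so $g-g' \in \mul H_2$ and therefore
\[
\{f,g\} = \{f,g'\} + \{0,\,g-g'\} \in H_1 \hplus (\{0\} \times \mul H_2),
\]
giving the missing inclusion $H_2 \subset H_1 \hplus (\{0\} \times \mul H_2)$ and completing the first equivalence. To obtain the second equivalence I apply the first one to the inverse relations $H_1^{-1} \subset H_2^{-1}$ from $\sK$ to $\sH$. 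Using $\dom H_i^{-1} = \ran H_i$ and $\mul H_i^{-1} = \ker H_i$, the equivalence (i) $\Leftrightarrow$ (ii) for the pair $H_i^{-1}$ reads: $\ran H_1 = \ran H_2$ holds if and only if $H_2^{-1} = H_1^{-1} \hplus (\{0\} \times \ker H_2)$. Since inversion is the coordinate-swap bijection of $\sH \times \sK$ onto $\sK \times \sH$, it is linear and respects $\hplus$; taking inverses on both sides turns the last identity into $H_2 = H_1 \hplus (\ker H_2 \times \{0\})$, which is precisely (iv).

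The argument is essentially bookkeeping and presents no serious obstacle. The only point requiring a little care is the reduction via inverses: one must note that $(\{0\} \times \ker H_2)^{-1} = \ker H_2 \times \{0\}$ and that inversion distributes over $\hplus$, both of which follow directly from the definition of $H^{-1}$ as coordinate reversal. Should one prefer to avoid the duality, the equivalence (iii) $\Leftrightarrow$ (iv) can be proved directly by mimicking the first argument: for $\{f,g\} \in H_2$ one uses $\ran H_2 = \ran H_1$ to choose $f'$ with $\{f',g\} \in H_1$, and then observes that $\{f-f',\,0\} = \{f,g\} - \{f',g\} \in H_2$ forces $f - f' \in \ker H_2$.
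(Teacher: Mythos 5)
Your proof is correct and follows essentially the same route as the paper: the same subtraction argument for (i) $\Rightarrow$ (ii), the trivial converse, and the reduction of (iii) $\Leftrightarrow$ (iv) to the first equivalence via inverse relations, which is exactly the ``symmetry'' the paper invokes. You are merely more explicit about the inclusion $H_1 \hplus (\{0\} \times \mul H_2) \subset H_2$ and about why inversion respects $\hplus$, which is fine.
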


\begin{proof}
By symmetry it suffices to show the equivalence between (i) and
(ii).

(i) $\Rightarrow$ (ii) It suffices to show that $H_2 \subset H_1
\hplus (\{0\} \times \mul H_2)$. Let $\{h,h'\} \in H_2$.  Since $h
\in \dom H_2 \subset \dom H_1$, there exists an element $k' \in \sK$
such that $\{h,k'\} \in H_1$. Hence, with $\varphi'=h'-k'$, it
follows that
\[
 \{h,h'\} = \{h,k'\}+\{0,\varphi'\},
\]
and thus $\{0,\varphi'\} \in H_2$ or $\varphi' \in \mul H_2$. Hence
(ii) follows.

(ii) $\Rightarrow$ (i) This implication is trivial.
\end{proof}

The useful result in the following corollary can be found in
\cite{Arens}.

\begin{corollary}\label{arens1}
Let $H_1$ and $H_2$ be relations from a Hilbert space $\sH$ to a
Hilbert space $\sK$ and assume that $H_1 \subset H_2$. Then the
following statements are equivalent:
\begin{enumerate}\def\labelenumi{\rm (\roman{enumi})}

\item $H_1=H_2$;

\item $\dom H_1 = \dom H_2$ and $\mul H_1 = \mul H_2$;

\item $\ran H_1 = \ran H_2$ and $\ker H_1 = \ker H_2$.

\end{enumerate}
\end{corollary}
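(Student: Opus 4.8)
The plan is to deduce all three equivalences directly from Proposition \ref{arens0}, so that the corollary reduces to the observation that the extra subspaces appearing there are absorbed into $H_1$ precisely when the multivalued parts (respectively kernels) agree. The implications $(\mathrm{i}) \Rightarrow (\mathrm{ii})$ and $(\mathrm{i}) \Rightarrow (\mathrm{iii})$ are immediate, since equality of relations forces equality of every associated part ($\dom$, $\ran$, $\ker$, $\mul$). Hence the content lies in the two reverse implications $(\mathrm{ii}) \Rightarrow (\mathrm{i})$ and $(\mathrm{iii}) \Rightarrow (\mathrm{i})$, and these are handled by the two halves of Proposition \ref{arens0} in parallel fashion.

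For $(\mathrm{ii}) \Rightarrow (\mathrm{i})$: from $\dom H_1 = \dom H_2$ the equivalence $(\mathrm{i}) \Leftrightarrow (\mathrm{ii})$ of Proposition \ref{arens0} gives
\[
 H_2 = H_1 \hplus (\{0\} \times \mul H_2).
\]
Now the hypothesis $\mul H_1 = \mul H_2$ enters: by the definition $H_1(\{0\}) = \mul H_1$ recalled in the preliminaries one has $\{0\} \times \mul H_1 \subset H_1$, whence $\{0\} \times \mul H_2 = \{0\} \times \mul H_1 \subset H_1$. The adjoined subspace is therefore redundant, the component-wise sum collapses to $H_1$, and $H_2 = H_1$ follows. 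The implication $(\mathrm{iii}) \Rightarrow (\mathrm{i})$ is the mirror image: from $\ran H_1 = \ran H_2$ the equivalence $(\mathrm{iii}) \Leftrightarrow (\mathrm{iv})$ gives $H_2 = H_1 \hplus (\ker H_2 \times \{0\})$, and since $\ker H_1 = \ker H_2$ with $\ker H_1 \times \{0\} \subset H_1$, the added subspace again lies inside $H_1$, forcing $H_2 = H_1$.

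The only point requiring attention — and essentially the whole argument — is recognizing that the subspaces $\{0\} \times \mul H_2$ and $\ker H_2 \times \{0\}$ that Proposition \ref{arens0} adjoins to $H_1$ are already contained in $H_1$ as soon as the multivalued parts, respectively the kernels, coincide. There is no analytic difficulty and no appeal to closedness or boundedness; the result is purely a statement about the lattice of linear subspaces of $\sH \times \sK$, and the two cases are interchanged by passing to inverses (which swaps $\dom$ with $\ran$ and $\mul$ with $\ker$), so one could alternatively prove one case and transcribe the other.
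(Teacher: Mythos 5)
Your proof is correct and follows exactly the route the paper intends: the corollary is stated as an immediate consequence of Proposition~\ref{arens0} (the paper omits the proof), and your observation that $\{0\}\times\mul H_1\subset H_1$ and $\ker H_1\times\{0\}\subset H_1$ make the adjoined summands redundant is precisely the missing step. Nothing to add.
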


 \begin{corollary}\label{arens2}
Let $H_1$ and $H_2$ be relations from a Hilbert space $\sH$ to a
Hilbert space $\sK$ and assume that $H_1 \subset H_2$. Then
\begin{enumerate}\def\labelenumi{\rm (\roman{enumi})}
\item $\dom  H_1=\sH$,  $\mul  H_2=\{0\}$ $\Rightarrow$ $H_1=H_2$;
\item $\ran H_1=\sK$, $\ker H_2=\{0\}$ $\Rightarrow$ $H_1=H_2$.
\end{enumerate}
\end{corollary}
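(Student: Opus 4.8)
The plan is to reduce both statements directly to Corollary~\ref{arens1} by verifying its hypotheses. The crucial starting observation is that the single inclusion $H_1 \subset H_2$ already forces all four characteristic subspaces to behave monotonically: one has $\dom H_1 \subset \dom H_2$, $\ran H_1 \subset \ran H_2$, $\ker H_1 \subset \ker H_2$, and $\mul H_1 \subset \mul H_2$. Each of these follows immediately from the definitions, since every pair $\{h,h'\}$ belonging to $H_1$ also belongs to $H_2$; in particular $\mul H_1 = H_1(\{0\}) \subset H_2(\{0\}) = \mul H_2$, and similarly for the other three.

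For part (i) I would argue as follows. Since $\dom H_1 \subset \dom H_2 \subset \sH$ and the hypothesis gives $\dom H_1 = \sH$, the two domains must coincide, so $\dom H_1 = \dom H_2$. Likewise, $\mul H_1 \subset \mul H_2 = \{0\}$ yields $\mul H_1 = \{0\} = \mul H_2$. Thus both equalities in condition~(ii) of Corollary~\ref{arens1} are in force, and that corollary delivers $H_1 = H_2$.

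Part (ii) is handled by the dual argument. Here $\ran H_1 \subset \ran H_2 \subset \sK$ together with $\ran H_1 = \sK$ gives $\ran H_1 = \ran H_2$, while $\ker H_1 \subset \ker H_2 = \{0\}$ gives $\ker H_1 = \{0\} = \ker H_2$. These are precisely the two equalities of condition~(iii) of Corollary~\ref{arens1}, so once more $H_1 = H_2$.

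No genuine obstacle is expected here: the entire content is already packaged in Corollary~\ref{arens1}, and the only thing that needs checking is that the two one-sided hypotheses in each part, when combined with the monotonicity coming from $H_1 \subset H_2$, upgrade to the two-sided equalities of domains/multivalued parts (respectively ranges/kernels) that the corollary requires.
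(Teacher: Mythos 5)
Your argument is correct and is exactly the intended derivation: the paper states Corollary~\ref{arens2} without proof as an immediate consequence of Corollary~\ref{arens1}, and your reduction---using the monotonicity of $\dom$, $\ran$, $\ker$, $\mul$ under the inclusion $H_1 \subset H_2$ to upgrade the one-sided hypotheses to the two-sided equalities required there---is precisely the implicit argument. Nothing is missing.
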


The sum of two linear relations $H_1$ and $H_2$ from $\sH$ to $\sK$
is a linear relation defined by
\[
 H_1+H_2=\{\{f, f'+f''\}:\, \{f,f'\} \in H_1, \,\{f,f''\} \in H_2\},
\]
while their componentwise sum is a linear relation defined by
\[
 H_1 \hplus H_2=\{\{f+g, f'+g'\}:\, \{f,f'\} \in H_1, \,\{g,g'\} \in H_2\}.
\]
Let $H_1$ be a relation from a Hilbert space $\sH$ to a Hilbert space $\sM$
and let  $H_2$ be a relation from a Hilbert space $\sM$ to a Hilbert space $\sK$.
The product $H_2H_1$ is a linear relation from $\sH$ to $\sK$ defined by
\begin{equation}\label{prod}
 H_2 H_1=\{\{f, f'\}:\, \{f,\varphi\} \in H_1, \,\{\varphi,f'\} \in H_2 \,\,
  \mbox{for some}\,\, \varphi \in \sM\}.
\end{equation}
Observe, that
\begin{equation}\label{kerAB}
 \ker (H_2H_1)= H_1^{-1}(\ker H_2)
  =\{ f\in\sH:\, \{f,\varphi\}\in H_1\,\, \mbox{for some}\,\, \varphi \in \ker H_2\},
\end{equation}
and
\begin{equation}\label{mulAB}
 \mul H_2H_1=H_2(\mul H_1)=\{f'\in\sK:\,\{\varphi,f'\}\in H_2\,\,
 \mbox{for some}\,\, \varphi \in \mul H_1\}.
\end{equation}
In particular, $\ker H_1\subset \ker H_1H_2$ and $\mul H_2\subset
\mul H_2H_1$. The following identities are also easy to check:
\begin{equation}\label{BBinv}
 HH^{-1}=I_{\ran H}\,{\widehat +}\,\bigl(\{0\}\times\mul H\bigr)
 \quad\text{and}\quad
 H^{-1} H=I_{\dom H}\,{\widehat+}\,\bigl(\{0\}\times\ker H\bigr)
\end{equation}
with both sums direct. Hence, in particular,
\begin{equation}\label{BBinv+}
 \mul H=\{0\} \,\, \Rightarrow \,\, HH^{-1}=I_{\ran H}; \quad  
\ker H=\{0\} \,\, \Rightarrow \,\, H^{-1} H=I_{\dom H}.
\end{equation}

The closure of a linear relation $H$ from $\sH$ to $\sK$
is the closure of the linear subspace in $\sH \times \sK$,
when the product is provided with the product topology.
The closure of an operator need not be an operator;
if it is then one speaks of a closable operator.
The relation $H$ is called closed when it is closed
as a subspace of $\sH \times \sK$.
In this case both $\ker H \subset \sH$ and
$\mul H \subset \sK$ are closed subspaces.

Let $H$ be a closed linear relation from $\sH$ to $\sK$.
Then $H_{\rm mul}=\{0\} \times \mul H$ is a closed linear relation
and $H_{\rm s}=H \hominus H_{\rm mul}$, so that
$\dom H_{\rm s}=\dom H$ is dense in
$\cdom H=\sH\ominus \mul H^*$, while
$\ran H_{\rm s} \subset \cdom H^*= \sK \ominus \mul H$.
The operator part $H_{\rm s}$ and $H_{\rm mul}$ lead
to the componentwise orthogonal decomposition
\begin{equation}\label{operpart}
 H=H_{\rm s} \hoplus H_{\rm mul}.
\end{equation}
The adjoint relation $H^*$ from $\sK$ to $\sH$ is
defined by $H^*=JH^\perp=(JH)^\perp$, where
$J\{f,f'\}=\{f',-f\}$. The adjoint is automatically
a closed linear relation and the closure of $H$ is
given by $H^{**}$. The operator part $(H^{*})_{\s}$
 is densely defined in $\cdom H^{*}=\sH \ominus \mul H^{**}$
and maps into  $\cdom H=\cdom H^{**}=\sH \ominus \mul H^*$.
When $H$ is closed the operator
parts $H_{\rm s}$ and $(H^{*})_{\s}$ are connected by
\begin{equation}\label{Xadj}
(H_{\rm s})^{\times}=(H^{*})_{\s},
\end{equation}
where $(H_{\s})^{\times}$ denotes the adjoint of $H_{\s}$ in the
sense of the smaller spaces $\cdom H$ and $\cdom H^*$.

Let $H_1$ be a relation from a Hilbert space $\sH$
to a Hilbert space $\sM$
and let  $H_2$ be a relation from a Hilbert space $\sM$
to a Hilbert space $\sK$.
The product satisfies
\begin{equation}\label{Bbadj0}
 H_1^*H_2^* \subset (H_2H_1)^*.
\end{equation}
Moreover, if $H_2 \in \mathbf{B}(\sM,\sK)$ then there is actually
equality
\begin{equation}\label{Bbadj}
 H_1^*H_2^* = (H_2H_1)^*,
\end{equation}
see \cite[Lemma~2.4]{HSS2005}, so that, in particular
\[
H_2 H_1^{**} \subset (H_2H_1)^{**}.
\]
Assume that the relations $H_{1}$ and $H_{2}$ are closed.
In general the product $H_{2}H_{1}$ is not closed.
However, if for instance  $H_{1} \in \mathbf{B}(\sH,\sM)$,
then the product $H_{2}H_{1}$ is closed.

A linear relation $H$ in a Hilbert space $\sH$
is symmetric if $H \subset H^{*}$
and selfadjoint if $H = H^{*}$.  If the relation $H$ is selfadjoint
then $H_\s$ is a selfadjoint
operator in $\cdom H=\sH \ominus \mul H$.
A linear relation $H$ in a Hilbert space $\sH$ is nonnegative if $(h',h) \ge 0$
for all $\{h,h'\} \in H$. In particular a nonnegative relation is symmetric.

An important special case of a nonnegative selfadjoint relation
appears when one considers relations of the form $T^{*}T$ where $T$
is a closed linear relation from a Hilbert space $\sH$ to a Hilbert
space $\sK$; cf. \cite{SeTa2012}.

\begin{lemma}\label{T*T}
Let $T$ be a closed  relation from a Hilbert space $\sH$ to a
Hilbert space $\sK$.  Then the product $T^{*}T$ is a nonnegative
selfadjoint relation in $\sH$. Furthermore,
\begin{equation}\label{henil0}
 T^*T=T^*T_{\rm s} = (T_{\rm s})^*T_{\rm s},
\end{equation}
so that in particular
\begin{equation}\label{heni}
\ker (T^*T)=\ker T=\ker T_{\rm s}, \quad
 \mul (T^*T)=\mul T^*=\mul (T_{\rm s})^*.
\end{equation}
The operator part of $T^* T$ can be rewritten as
\begin{equation}\label{s-sing+}
 (T^* T)_{\rm s}= (T^*)_{\rm s}T_{\rm s}=(T_{\rm s})^{\times}T_{\rm s}.
\end{equation}
\end{lemma}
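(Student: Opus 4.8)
The plan is to establish the purely algebraic identities \eqref{henil0}, \eqref{heni}, and \eqref{s-sing+} first---these need only the product formulas \eqref{kerAB}, \eqref{mulAB} together with the componentwise decomposition \eqref{operpart}---and then to read off nonnegativity and selfadjointness from the operator part by invoking the classical von Neumann theorem for $T_{\rm s}$. Nonnegativity is immediate: if $\{h,h'\}\in T^*T$ with a bridging element $\varphi$, so that $\{h,\varphi\}\in T$ and $\{\varphi,h'\}\in T^*$, then the defining property of $T^*$ evaluated at $\{h,\varphi\}\in T$ gives $(h',h)=(\varphi,\varphi)\ge 0$. The decisive observation for \eqref{henil0} is that this bridging element automatically lies in $\ran T_{\rm s}$. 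Since $T$ is closed, $T^{**}=T$, whence $\mul T=\mul T^{**}=(\dom T^*)^\perp$, that is, $\dom T^*\subset\cdom T^*=\sK\ominus\mul T$, so every $\varphi\in\dom T^*$ is orthogonal to $\mul T$. Writing $\{h,\varphi\}\in T=T_{\rm s}\hoplus(\{0\}\times\mul T)$ as $\{h,\varphi_0\}+\{0,m\}$ with $\varphi_0\in\ran T_{\rm s}\subset\sK\ominus\mul T$ and $m\in\mul T$, orthogonality forces $m=0$, so $\{h,\varphi\}\in T_{\rm s}$ and hence $\{h,h'\}\in T^*T_{\rm s}$. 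This gives $T^*T\subset T^*T_{\rm s}$, while the reverse inclusion is clear from $T_{\rm s}\subset T$.

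For the equality $T^*T_{\rm s}=(T_{\rm s})^*T_{\rm s}$ I would argue directly: given $\{\psi,h'\}\in(T_{\rm s})^*$ with $\psi\in\ran T_{\rm s}$ (so $\psi\perp\mul T$) and any $\{g,g'\}\in T$, splitting $g'=g'_{\rm s}+g'_m$ along \eqref{operpart} yields $(\psi,g')=(\psi,g'_{\rm s})=(h',g)$, so $\{\psi,h'\}\in T^*$; the opposite inclusion follows from $T^*\subset(T_{\rm s})^*$. The kernel and multivalued identities \eqref{heni} then follow mechanically. By \eqref{kerAB}, $\ker(T^*T)=T^{-1}(\ker T^*)=T^{-1}\bigl((\ran T)^\perp\bigr)$, and since a bridging element must lie in $\ran T\cap(\ran T)^\perp=\{0\}$ one gets $\ker(T^*T)=\ker T$, while $\ker T=\ker T_{\rm s}$ is immediate from \eqref{operpart}. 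By \eqref{mulAB}, $\mul(T^*T)=T^*(\mul T)$; as $\dom T^*\perp\mul T$, the only admissible element of $\mul T\cap\dom T^*$ is $0$, so $\mul(T^*T)=\mul T^*$, and finally $\mul T^*=(\dom T)^\perp=(\dom T_{\rm s})^\perp=\mul(T_{\rm s})^*$.

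For the operator part \eqref{s-sing+} I would decompose the outer factor, $T^*=(T^*)_{\rm s}\hoplus(\{0\}\times\mul T^*)$ via \eqref{operpart}, and check that $T^*T_{\rm s}=(T^*)_{\rm s}T_{\rm s}\hoplus(\{0\}\times\mul T^*)$: the first summand is an operator (its multivalued part is $(T^*)_{\rm s}(\mul T_{\rm s})=\{0\}$ by \eqref{mulAB}) with range in $\cdom T=\sH\ominus\mul T^*$, while the second is orthogonal to it in the second coordinate. Since $\mul(T^*T)=\mul T^*$, this exhibits $(T^*)_{\rm s}T_{\rm s}$ as the operator part, so $(T^*T)_{\rm s}=(T^*)_{\rm s}T_{\rm s}$, which equals $(T_{\rm s})^\times T_{\rm s}$ by \eqref{Xadj}. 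Selfadjointness is then deduced by passing to $T_{\rm s}$, a closed operator densely defined in $\cdom T$ with values in $\cdom T^*$ whose adjoint in these smaller spaces is $(T^*)_{\rm s}=(T_{\rm s})^\times$ by \eqref{Xadj}: von Neumann's theorem shows that $(T_{\rm s})^\times T_{\rm s}=(T^*T)_{\rm s}$ is nonnegative selfadjoint in $\cdom T$ with domain a core for $T_{\rm s}$, hence dense in $\cdom T$. Consequently $\overline{\dom(T^*T)}=\cdom T=\sH\ominus\mul T^*=\bigl(\mul(T^*T)\bigr)^\perp$, and together with selfadjointness of the operator part this forces $T^*T=(T^*T)^*$.

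The main obstacle is the step \eqref{henil0}: the whole reduction rests on the inclusion $\dom T^*\subset\sK\ominus\mul T$, which holds precisely because $T$ is closed and which collapses the bridging element onto $\ran T_{\rm s}$; without closedness the two products $T^*T$ and $T^*T_{\rm s}$ genuinely differ. A secondary point requiring care is the reduction of selfadjointness to the operator part, where one must confirm that the multivalued part of $T^*T$ is exactly $(\dom(T^*T))^\perp$---this is where the core property furnished by von Neumann's theorem is used.
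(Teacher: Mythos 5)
Your proof is correct, but at the two non-trivial points it travels a different road from the paper's. For \eqref{henil0} the paper argues globally with the orthogonal projection $P$ of $\sK$ onto $\cdom T^*$: from $PT=T_{\rm s}$ and $T^*\subset T^*P$ one gets the sandwich $T^*T\subset T^*PT\subset T^*T$, and the single identity $(T_{\rm s})^*=(PT)^*=T^*P$ turns the middle term into both $T^*T_{\rm s}$ and $(T_{\rm s})^*T_{\rm s}$ at once. Your element-wise argument --- the bridging element lies in $\dom T^*\subset\sK\ominus\mul T$ and is therefore forced into $T_{\rm s}$ --- proves the same thing and makes the role of closedness more visible, at the cost of a separate verification of $T^*T_{\rm s}=(T_{\rm s})^*T_{\rm s}$. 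For selfadjointness the paper stays entirely in the relation setting: $\ran(T^*T+I)=\sH$ follows from the decomposition $\sH\times\sK=T\hoplus JT^*$, and a nonnegative (hence symmetric) relation whose translate by $I$ has full range is selfadjoint. You instead reduce to the operator part and invoke the classical von Neumann theorem for the closed densely defined operator $T_{\rm s}$ between $\cdom T$ and $\cdom T^*$, using the core property to identify $\overline{\dom(T^*T)}$ with $(\mul(T^*T))^\perp$ before concluding. Both routes are legitimate: the paper's range argument is shorter and does not presuppose von Neumann's theorem (it essentially reproves it for relations), while your reduction reuses the decomposition \eqref{operpart} that you need anyway for \eqref{s-sing+} and makes explicit why the multivalued part causes no harm.
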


\begin{proof}
It is clear from the definition  that $T^*T$ is a nonnegative
selfadjoint relation in $\sH$. In fact $T^*T$ is selfadjoint since
$\ran (T^*T+I)=\sH$, which follows from $\sH^2=T \hoplus T^\perp=T
\hoplus JT^*$.

Next, let $P$ be the orthogonal projection from $\sK$ onto $\cdom
T^*$, so that $PT=T_{\rm s}\subset T$. Since $\dom T^{*} \subset
\cdom T^{*}=\ran P$ it also follows that $T^* \subset T^*P$. Hence
\begin{equation}\label{pre}
 T^*T \subset T^*P T   \subset T^* T,
\end{equation}
so that the inclusions are both equalities. From $PT=T_{\rm s}$ one
obtains that $(T_{\rm s})^*=(PT)^*=T^*P$, so that \eqref{pre} leads
to \eqref{henil0}. Since $T_{\rm s}$ is an operator, \eqref{heni} is
immediate from \eqref{henil0}. Furthermore, \eqref{s-sing+} is clear
from \eqref{Xadj}.
\end{proof}

\begin{lemma}\label{sqrt}
Let $H$ be a nonnegative selfadjoint relation in a Hilbert space $\sH$.
Then there exists a unique nonnegative selfadjoint relation $K$ in $\sH$,
denoted by  $K=H^{\half}$, such that $K^{2}=H$.
Moreover, $H^{\half}$ has the representation
\begin{equation}\label{wortel}
H^{\half}=H_{\s}^{\half} \hoplus H_{\mul}.
\end{equation}
\end{lemma}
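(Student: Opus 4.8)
The plan is to reduce the statement to the classical square root theorem for nonnegative selfadjoint \emph{operators} by passing to the operator part of $H$ via the decomposition \eqref{operpart}. Since $H$ is selfadjoint one has $\mul H = \mul H^*$, so $\cdom H = \sH \ominus \mul H$ and the operator part $H_\s$ is a nonnegative selfadjoint operator in the Hilbert space $\cdom H$. First I would apply the usual spectral calculus in $\cdom H$ to produce the unique nonnegative selfadjoint operator square root $H_\s^{\half}$ of $H_\s$ there, and then set $K = H_\s^{\half} \hoplus H_{\mul}$, which is precisely the relation appearing in \eqref{wortel}. Since $H_\s^{\half}$ is selfadjoint in $\cdom H$ and $\mul K = \mul H$, the componentwise orthogonal sum $K$ is again a nonnegative selfadjoint relation in $\sH$: nonnegativity follows because for $\{f, H_\s^{\half} f + m\} \in K$ with $f \in \dom H_\s^{\half} \subset \cdom H$ and $m \in \mul H$ one has $(H_\s^{\half} f + m, f) = (H_\s^{\half} f, f) \ge 0$, the cross term vanishing by orthogonality.

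The central step is the identity $K^2 = H$, computed directly from the definition \eqref{prod} of the product. The point I would stress is that an element $\{f, f''\} \in K^2$ arises through an intermediate vector $\varphi$ with $\{f, \varphi\}, \{\varphi, f''\} \in K$; writing $\varphi = H_\s^{\half} f + m$ with $m \in \mul H$ and using that $\varphi$ must then lie in $\dom K = \dom H_\s^{\half} \subset \cdom H$, the orthogonality of $\cdom H$ and $\mul H$ forces $m = 0$. Thus $\varphi = H_\s^{\half} f \in \dom H_\s^{\half}$, i.e. $f \in \dom H_\s$, and $f'' = H_\s^{\half}\varphi + m' = H_\s f + m'$ for arbitrary $m' \in \mul H$. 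This yields exactly $K^2 = H_\s \hoplus H_{\mul} = H$; the multivalued part is thereby transported through the product unchanged while the operator parts compose as operators.

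For uniqueness I would run the same computation in reverse. Let $K'$ be any nonnegative selfadjoint relation in $\sH$ with $(K')^2 = H$ and decompose it as $K' = K'_\s \hoplus (\{0\} \times \mul K')$ according to \eqref{operpart}, with $K'_\s$ nonnegative selfadjoint in $\cdom K'$. The same argument as above gives $(K')^2 = (K'_\s)^2 \hoplus (\{0\} \times \mul K')$. Comparing multivalued parts in $(K')^2 = H$ forces $\mul K' = \mul H$, hence $\cdom K' = \cdom H$, and comparing operator parts gives $(K'_\s)^2 = H_\s$ as operators in $\cdom H$. The uniqueness of the nonnegative selfadjoint operator square root in $\cdom H$ then yields $K'_\s = H_\s^{\half}$, and therefore $K' = K$.

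The main obstacle I anticipate is the bookkeeping in the product computation, precisely the claim that the multivalued component of the intermediate vector $\varphi$ must vanish; this is what makes the multivalued part pass inertly through $K^2$ and is the place where selfadjointness (through $\cdom H \perp \mul H$) is genuinely used. Everything else rests on the established correspondence between nonnegative selfadjoint relations in $\sH$ and nonnegative selfadjoint operators in closed subspaces of $\sH$, together with the classical operator-theoretic square root theorem.
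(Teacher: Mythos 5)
Your proof is correct and follows essentially the same route as the paper: define $K$ by \eqref{wortel}, chase elements through the product using the orthogonality of $\dom K$ and $\mul K$ to kill the intermediate multivalued component, and reduce uniqueness to the classical operator square root after identifying $\mul K' = \mul H$. The only (immaterial) difference is that you verify the inclusion $H \subset K^2$ by direct computation, whereas the paper gets it for free from the maximality of the selfadjoint relation $K^2 = K^*K$ inside the selfadjoint relation $H$.
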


\begin{proof}
It is clear that $K$ defined by the right hand side of \eqref{wortel}
is a nonnegative selfadjoint relation with $\mul K=\mul H$.
To see that $K^{2}=H$, let $\{f,f'\} \in K^{2}$. Then  $\{f,\varphi\} \in K$ and
$\{\varphi,f'\} \in K$. Clearly,
\[
\varphi=H_{s}^{\half}f+\alpha, \quad f'=H_{s}^{\half} \varphi +\beta,
\]
with $\alpha \in \mul H$ and $\beta \in \mul H$.
Since $\varphi \in \dom H_{s}^{\half}$
it follows that $\alpha=0$ and $f'=H_{s}f+\beta$,
so that $\{f,f'\} \in H$. It follows that
$K^{2} \subset H$, and since $K^{2}=K^{*}K$ is selfadjoint,
it follows that $K^{2}=H$.

In order to show uniqueness, let $K$ be a nonnegative
selfadjoint relation such that $K^{2}=H$.
Then
\[
\mul K=\mul H.
\]
To see this let $\{0,\psi\} \in K$,
then clearly $\{0,\psi\} \in K^{2}=H$,
so that $\mul K \subset \mul H$.
For the reverse inclusion, let $\{0,\psi\}\in
H=K^{2}$. Then $\{0, \varphi\} \in K$ and $\{\varphi, \psi \} \in K$.
Since $K$ is selfadjoint
it follows that $\varphi=0$, so that $\{0,\psi \} \in K$
and $\mul H \subset \mul K$.
This implies that $K=K_{s} \oplus H_{\mul}$,
where $K_{s}$ is a nonnegative selfadjoint operator.
It will now be shown that $H_{s}=(K_{\s})^{2}$,
and since the square root of
a nonnegative selfadjoint operator is uniquely
determined it follows that
$K_{\s}=H_{\s}^{1/2}$.

To see that $H_{\s}=K_{\s}^{2}$, let $\{f,f'\} \in H_{\s}$.
Then $\{f,f'\} \in H=K^{2}$ and
$f' \perp \mul H$. Now $\{f,\varphi\} \in K$ and
$\{\varphi, f'\} \in K$ for some
$\varphi \in \cdom K=\cdom H$. Hence $\{f, \varphi\} \in K_{\s}$
and $\{\varphi, f'\} \in K_{\s}$,
so that $\{f,f'\} \in K_{\s}^{2}$. Thus $H_{\s} \subset (K_{\s})^{2}$.
For the converse inclusion, let $\{f,f'\} \in (K_{\s})^{2}$.
Then $\{f,\varphi\} \in K_{\s}\subset K$,
$\{\varphi,f'\} \in K_{\s} \subset K$,
so that $\{f,f'\} \in K^{2}=H$.  Since $f' \perp \mul H$,
it follows that $\{f,f'\} \in H_{\s}$.
\end{proof}

Let $H$ be a nonnegative selfadjoint relation.
Since Lemma \ref{sqrt} implies that
$\mul H^\half=\mul H$, it follows that
\[
(H^\half)_{\rm s}=(H_{\rm s})^\half,
\]
so that the notation $H^\half_\s$ is unambiguous.
Furthermore it is clear that
\begin{equation}\label{incll}
\dom H \subset \dom H^\half \subset \cdom H^\half=\cdom H.
\end{equation}
Therefore the following statements are equivalent:
\begin{equation}\label{DD}
 \dom H \text{\,\,closed};  \quad \dom H^\half \text{\,\,closed};
 \quad \dom H =\dom H^\half.
\end{equation}
Let $H$ be a nonnegative selfadjoint relation.
Then for each $x>0$,
\begin{equation}\label{hs1}
 \dom(H + x)^{1/2}= \dom H^{1/2},
\end{equation}
and, moreover,
\begin{equation}\label{hs2}
 \| (H_{s} + x)^{1/2} h\|^{2} = \| (H^{1/2})_s h\|^{2} +x \|
h\|^{2} , \quad h \in \dom H^{1/2}.
\end{equation}
It is clear that the identity holds for $h \in \dom H$ and
since $\dom H$ is a core for $H^{1/2}$ it holds for $h \in \dom H^{1/2}$.
  
There is a natural ordering for nonnegative selfadjoint relations
in a Hilbert space $\sH$; it is inspired by the corresponding situation for
selfadjoint operators $H_1, H_1 \in \mathbf{B}(\sH)$.
Two nonnegative selfadjoint relations $H_1$ and $H_2$
are said to satisfy the inequality $H_1 \leq H_2$ if
\begin{equation}\label{ineq1}
\dom H_{2\s}^{\half} \subset \dom H_{1\s}^{\half}, \quad
\| H_{1\s}^{\half} h \| \leq \| H_{2\s}^{\half} h \|, \quad
h \in \dom H_2^{\half}.
\end{equation}
It follows from \eqref{hs1} and \eqref{hs2}  
that $H_{1} \leq H_{2}$ if and only if $H_1+x \leq H_2+x$ 
for some (and hence for all) $x >0$.
 
A sesquilinear form (or form for short) $\st[\cdot,\cdot]$
in a Hilbert space $\sH$
is a mapping from $\sD \times \sD$ to $\dC$
where $\sD$ is a (not necessarily densely defined)
linear subspace of $\sH$, such that
it is linear in the first  entry and anti-linear in the second entry.
The domain $\dom \st$ is defined by $\dom \st=\sD$.
The corresponding quadratic form $\st[\cdot]$ is defined by
$\st[\varphi]=\st[\varphi,\varphi]$, $\varphi \in \dom \st$.
A  sesquilinear form $\st$ is said to be nonnegative if
\[
\st[\varphi] \ge 0, \quad \varphi \in \dom \st.
\]
The semibounded form $\st$ in $\sH$ is said to be \emph{closed}
if for any sequence $(\varphi_n)$
in $\dom \st$ one has
\begin{equation}
 \varphi_{n} \to \varphi, \quad
\st[\varphi_{n} - \varphi_{m}] \to 0,
 \quad \Rightarrow \quad
 \varphi \in \dom \st,\quad \st[\varphi_{n} - \varphi] \to 0.
\end{equation}
The inequality $\st_1 \leq \st_2$ for forms $\st_1$ and
$\st_2$ is defined by
\begin{equation}\label{ineq0}
 \dom \st_2 \subset \dom \st_1,
 \quad
 \st_1[h] \leq \st_2[h],
 \quad
 h \in \dom \st_2.
\end{equation}
In particular, $\st_2 \subset \st_1$ implies $\st_1 \leq \st_2$.

The theory of nonnegative forms can be found in  \cite{Kato}. The
representation theorem gives a connection between nonnegative
selfadjoint relations and nonnegative closed forms; see
\cite{HSSW,Kato}.

\begin{theorem}[representation theorem]\label{repre}
Let $\st$  be a closed nonnegative form in the Hilbert space $\sH$.
Then there exists a nonnegative selfadjoint relation $H$
in $\sH$ such that
\begin{enumerate}\def\labelenumi{\rm (\roman{enumi})}
\item $\dom H \subset \dom \st$ and
\begin{equation}\label{eqn:firstrepresentation}
 \st[\varphi,\psi] = ( \varphi',\psi )
\end{equation}
 for every $\{ \varphi,\varphi' \} \in H$ and $\psi \in \dom \st$;
\item $\dom H$ is a core for $\st$ and $\mul H=(\dom \st)^{\perp}$;
\item if $\varphi \in \dom \st$, $\omega \in \sH$, and
\begin{equation}\label{eqn:vergelijking_1st_representation(3)}
 \st[\varphi,\psi] = ( \omega,\psi )
\end{equation}
holds for every $\psi$ in a core of $\st$,
then  $\{ \varphi,\omega \} \in H$.
\end{enumerate}
The nonnegative selfadjoint relation $H$ is uniquely determined by
(i).
\end{theorem}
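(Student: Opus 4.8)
The plan is to realize $H$ from a single bounded operator manufactured from $\st$, letting the calculus of the inverse relation supply the multivalued part and the selfadjointness for free. First I would turn $\dom\st$ into a Hilbert space $\sH_\st$ by equipping it with the inner product $(\varphi,\psi)_\st=\st[\varphi,\psi]+(\varphi,\psi)$; the assumption that $\st$ is closed says precisely that $\sH_\st$ is complete. The inclusion $J\colon\sH_\st\to\sH$, $J\varphi=\varphi$, is then a contraction because $\st\ge0$ forces $\|\varphi\|\le\|\varphi\|_\st$, it is injective, and $\cran J=\cdom\st$. Consequently $\ker J^*=(\dom\st)^{\perp}$ and $\cran J^*=\sH_\st$. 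Set $T:=JJ^*\in\mathbf{B}(\sH)$, so that $0\le T\le I$ and $\ker T=\ker J^*=(\dom\st)^{\perp}$.

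The decisive step is a characterization of the relation $T^{-1}-I$: a pair $\{\varphi,\varphi'\}$ belongs to $T^{-1}-I$ if and only if $\varphi\in\dom\st$ and $\st[\varphi,\psi]=(\varphi',\psi)$ for every $\psi\in\dom\st$. To see this I would put $\eta=\varphi+\varphi'$ and use the two-line identity $(J^*\eta,\psi)_\st=(\eta,J\psi)=(\eta,\psi)$ together with the definition of $(\cdot,\cdot)_\st$; this converts the form identity into the operator identity $\varphi=J^*\eta$, that is $\varphi=T\eta=T(\varphi+\varphi')$, which is exactly membership in $T^{-1}-I$. Having this equivalence in hand I would simply define $H:=T^{-1}-I$, the relation sum of $T^{-1}$ with the bounded operator $-I$.

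All three items are now read off. For (i), the characterization gives $\dom H\subset\dom\st$ and the representation $\st[\varphi,\psi]=(\varphi',\psi)$ for $\{\varphi,\varphi'\}\in H$ and $\psi\in\dom\st$. For (ii), $\mul H=\mul(T^{-1})=\ker T=(\dom\st)^{\perp}$, while $\dom H=\ran T=\ran J^*$, which is $\|\cdot\|_\st$-dense in $\sH_\st$ since $\cran J^*=\sH_\st$; hence $\dom H$ is a core for $\st$. For (iii), both $\psi\mapsto\st[\varphi,\psi]$ and $\psi\mapsto(\omega,\psi)$ are $\|\cdot\|_\st$-continuous (Cauchy--Schwarz for the nonnegative form, respectively $\|\psi\|\le\|\psi\|_\st$), so an identity valid on a core extends to all of $\dom\st$ and the characterization yields $\{\varphi,\omega\}\in H$. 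Selfadjointness of $H$ follows from $(T^{-1})^*=(T^*)^{-1}=T^{-1}$ and the invariance of selfadjointness under subtraction of the bounded operator $I$; nonnegativity follows from $0\le T\le I$, whence $T^2\le T$, because for $\{h,h'\}\in H$ with $\eta=h+h'$ one finds $(h',h)=(T\eta,\eta)-\|T\eta\|^2\ge0$.

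Uniqueness is then immediate: if $H'$ is any nonnegative selfadjoint relation satisfying (i), then (i) asserts exactly that every element of $H'$ fulfils the defining condition of $H$, so $H'\subset H$; taking adjoints and using that both relations are selfadjoint gives $H=H^*\subset(H')^*=H'$, hence $H=H'$. I expect the main difficulty to be bookkeeping rather than depth: one must keep the geometry of $\sH_\st$ separate from that of $\sH$ so that the identifications $\cran J=\cdom\st$, $\ker J^*=(\dom\st)^{\perp}$, $\cran J^*=\sH_\st$, and $\dom H=\ran J^*$ are each correctly justified. The sole substantive hypothesis is the closedness of $\st$, used once but indispensably, to make $\sH_\st$ complete and thereby guarantee that $J^*$, and with it $T=JJ^*$, exists as a bounded operator; everything else is relation algebra.
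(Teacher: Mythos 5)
Your proof is correct. Note that the paper itself gives no proof of this theorem: it is stated as a known result with references to Kato and to Hassi--Sandovici--de Snoo--Winkler, so there is no internal argument to compare against. Your construction --- completing $\dom \st$ under $\st[\cdot,\cdot]+(\cdot,\cdot)$, taking the contractive embedding $J$, forming $T=JJ^*$ and setting $H=T^{-1}-I$ --- is essentially the standard proof from those sources, adapted correctly to the non-densely-defined setting: the key points, namely the equivalence $\{\varphi,\varphi'\}\in T^{-1}-I \Leftrightarrow \varphi=T(\varphi+\varphi')$, the identification $\mul H=\ker T=(\dom\st)^\perp$, the density of $\ran J^*$ in $\sH_\st$ giving the core property, the nonnegativity via $T-T^2\ge 0$, and the uniqueness argument $H'\subset H \Rightarrow H=H^*\subset (H')^*=H'$, are all sound. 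The one place where readers might want a word more is the silent identification of $\ran T=J(\ran J^*)$ with $\ran J^*$ as subsets of $\sH$, but since $J$ is the set-theoretic inclusion this is harmless, and you flag the need for this bookkeeping yourself.
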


The following result is a direct consequence of the
representation theorem.

\begin{proposition}\label{cor+}
Let $T$ be a closed linear relation from a Hilbert space
$\sH$ to a Hilbert space $\sK$.
The nonnegative selfadjoint relation $T^*T$ in the Hilbert space $\sH$
corresponds to the closed nonnegative form
\begin{equation}\label{henil}
 \st[h,k]=(T_{\rm s} h,T_{\rm s} k)_\sK,
 \quad h,k \in \dom \st=\dom T_{\rm s} = \dom T,
\end{equation}
and, in particular,
\begin{equation}\label{hhhenil}
 \st[h,k]=( (T_{\rm s})^{\times} T_{\rm s} h, k)_\sK,
 \quad h \in \dom T^*T, \quad k \in \dom \st.
\end{equation}
\end{proposition}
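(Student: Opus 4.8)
The plan is to invoke the representation theorem, Theorem \ref{repre}, for the form $\st$ and then pin down the associated relation as $T^*T$ via the uniqueness clause of that theorem. First I would confirm that $\st[h,k]=(T_{\rm s}h,T_{\rm s}k)_\sK$ on $\dom\st=\dom T_{\rm s}=\dom T$ is a closed nonnegative form. Nonnegativity is immediate from $\st[h]=\|T_{\rm s}h\|^2\ge 0$. For closedness I would use that $T_{\rm s}=T\hominus T_{\rm mul}$ is a \emph{closed} operator: if $\varphi_n\to\varphi$ and $\st[\varphi_n-\varphi_m]\to 0$, then $(T_{\rm s}\varphi_n)$ is Cauchy in $\sK$, say $T_{\rm s}\varphi_n\to\psi$, and closedness of $T_{\rm s}$ gives $\{\varphi,\psi\}\in T_{\rm s}$, so that $\varphi\in\dom\st$ and $\st[\varphi_n-\varphi]\to 0$.

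Let $H$ be the nonnegative selfadjoint relation furnished by Theorem \ref{repre} for this form. The core of the argument is to check that $T^*T$ itself satisfies condition (i) of the representation theorem for $\st$; the uniqueness assertion of Theorem \ref{repre} then forces $H=T^*T$. The inclusion $\dom(T^*T)\subset\dom\st$ follows from the identity $T^*T=T^*T_{\rm s}$ in \eqref{henil0}, since any $h\in\dom(T^*T)$ then lies in $\dom T_{\rm s}=\dom\st$. For the identity \eqref{eqn:firstrepresentation} I would take $\{\varphi,\varphi'\}\in T^*T=T^*T_{\rm s}$, so that with $\eta=T_{\rm s}\varphi$ one has $\{\eta,\varphi'\}\in T^*$, and pair this against $\{\psi,T_{\rm s}\psi\}\in T_{\rm s}\subset T=T^{**}$ for $\psi\in\dom\st$. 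The defining relation for the adjoint $T^*=JT^\perp$ then gives $(\varphi',\psi)_\sH=(\eta,T_{\rm s}\psi)_\sK=(T_{\rm s}\varphi,T_{\rm s}\psi)_\sK=\st[\varphi,\psi]$, which is exactly condition (i). Hence $H=T^*T$, and $\st$ is the form associated with $T^*T$.

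Having identified the relation associated with $\st$ as $T^*T$, I would obtain \eqref{hhhenil} by specializing condition (i) to the operator part of $T^*T$. For $h\in\dom(T^*T)$ the pair $\{h,(T^*T)_{\rm s}h\}$ belongs to $T^*T$, and \eqref{s-sing+} identifies $(T^*T)_{\rm s}h=(T_{\rm s})^{\times}T_{\rm s}h$; substituting $\varphi=h$ and $\varphi'=(T_{\rm s})^{\times}T_{\rm s}h$ into \eqref{eqn:firstrepresentation} yields $\st[h,k]=((T_{\rm s})^{\times}T_{\rm s}h,k)$ for every $k\in\dom\st$. Here both $(T_{\rm s})^{\times}T_{\rm s}h$ and $k$ lie in $\sH$, so the pairing is the inner product of $\sH$.

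The one step genuinely needing care is the adjoint pairing in the second paragraph: it is essential that $T$ be closed, so that $\{\psi,T_{\rm s}\psi\}\in T=T^{**}$ may legitimately be inserted into the defining relation for elements of $T^*$. Once this is secured, the remaining steps are bookkeeping supported entirely by Lemma \ref{T*T} and the uniqueness part of Theorem \ref{repre}, and I anticipate no further obstacle.
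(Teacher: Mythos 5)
Your argument is correct and follows essentially the same route as the paper: closedness of the form comes from closedness of the operator part $T_{\rm s}$, the identities of Lemma \ref{T*T} (in particular $T^*T=T^*T_{\rm s}$ and \eqref{s-sing+}) supply the domain inclusion and the identity \eqref{hhhenil}, and Theorem \ref{repre} identifies the associated relation. The only difference is presentational: you verify condition (i) explicitly and appeal to the uniqueness clause, whereas the paper leaves this identification implicit, so your write-up is simply a more detailed version of the same proof.
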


\begin{proof}
Since $T_\s$ is a closed linear operator, it follows that the form
in \eqref{henil} is closed. Clearly, if in \eqref{henil} one assumes
that $h \in \dom T^*T=\dom (T_{\rm s})^{\times} T_{\rm s}$, see
\eqref{s-sing+}, then \eqref{hhhenil} follows. The result is now
obtained from Theorem \ref{repre}.
\end{proof}

Proposition~\ref{cor+} combined with \eqref{s-sing+} in
Lemma~\ref{T*T} yields the so-called second representation theorem
for closed forms.

\begin{corollary}
Let $\st$  be a closed nonnegative form in the Hilbert space $\sH$
and let $H$ be the corresponding nonnegative selfadjoint relation $H$ in $\sH$
as in Theorem \ref{repre}. Then
\begin{equation}
 \dom \st = \dom H_{\s}^{\half}
 \quad \mbox{and} \quad
 \st[ \varphi,\psi ] = (H_{\s}^{\frac{1}{2}} \varphi,H_{\s}^{\frac{1}{2}} \psi),
 \quad \varphi,\psi \in \dom \st.
\end{equation}
A subset of $\dom \st = \dom H_{\s}^{\frac{1}{2}}$ is a core of the
form $\st$ if and only if it is a core of the operator
$H_{\s}^{\frac{1}{2}}$. In particular,  $\dom H$ is a core of
$H^{\frac{1}{2}}$.
\end{corollary}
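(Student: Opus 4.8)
The plan is to realize the given form $\st$ through the square root of its associated relation and then to invoke the correspondence already set up in Proposition~\ref{cor+}. First I would put $T = H^{\half}$. By Lemma~\ref{sqrt} this is a nonnegative selfadjoint, hence closed, relation in $\sH$, and since $T$ is selfadjoint one has $T^{*}T = H^{\half}H^{\half} = H$, using $K^{2}=H$ from Lemma~\ref{sqrt}. Moreover $T_{\s} = (H^{\half})_{\s} = H_{\s}^{\half}$, and $\dom T = \dom H^{\half} = \dom H_{\s}^{\half}$, the last equality because a relation and its operator part share the same domain.

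Next I would apply Proposition~\ref{cor+} to this particular $T$. It produces the closed nonnegative form $\st'[h,k] = (T_{\s}h, T_{\s}k) = (H_{\s}^{\half}h, H_{\s}^{\half}k)$ with $\dom \st' = \dom T = \dom H_{\s}^{\half}$, and it asserts that $\st'$ corresponds to $T^{*}T = H$ in the sense of Theorem~\ref{repre}. Thus both $\st$ and $\st'$ are closed nonnegative forms whose associated nonnegative selfadjoint relation is $H$.

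The crux is to conclude $\st = \st'$. Since Theorem~\ref{repre} as stated only asserts uniqueness of the relation associated to a given form, I would argue the equality of the two forms directly. Both of them have $\dom H$ as a core, by Theorem~\ref{repre}(ii) applied to each pair, and on $\dom H$ they agree: for $\varphi, \psi \in \dom H$, writing $\{\varphi,\varphi'\}\in H$ with $\varphi' = H_{\s}\varphi + m$, $m \in \mul H$, and using $\mul H = (\dom \st)^{\perp}$ together with the selfadjointness of $H_{\s}^{\half}$, one computes $\st[\varphi,\psi] = (\varphi',\psi) = (H_{\s}\varphi, \psi) = (H_{\s}^{\half}\varphi, H_{\s}^{\half}\psi) = \st'[\varphi,\psi]$. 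Two closed forms sharing a common core on which they coincide must be equal, so $\st = \st'$; this yields both $\dom \st = \dom H_{\s}^{\half}$ and the stated formula for $\st[\varphi,\psi]$.

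Finally, the formula shows that the form norm given by $\st[\cdot] + \|\cdot\|^{2}$ on $\dom \st$ coincides with the graph norm $\|H_{\s}^{\half}\cdot\|^{2} + \|\cdot\|^{2}$ of the operator $H_{\s}^{\half}$ on $\dom H_{\s}^{\half}$. Since a core is precisely a subspace dense in the relevant norm, and the two norms agree, the two notions of core coincide, which is the asserted equivalence. Applying this to $\dom H$, which is a core of $\st$ by Theorem~\ref{repre}(ii), shows that $\dom H$ is a core of $H_{\s}^{\half}$, that is, of $H^{\half}$. The main obstacle I anticipate is exactly the passage $\st = \st'$: it requires supplementing the one-directional uniqueness in Theorem~\ref{repre} with the core-plus-closedness argument rather than merely quoting the theorem.
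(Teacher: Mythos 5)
Your proof is correct and follows essentially the same route as the paper, which derives the corollary in a single sentence from Proposition~\ref{cor+} applied to $T=H^{\half}$ (using $T^{*}T=H$ and $(H^{\half})_{\s}=H_{\s}^{\half}$). The one point the paper glosses over --- that two closed forms with the same associated relation coincide, which does not follow from the uniqueness clause of Theorem~\ref{repre} alone --- is exactly the gap you identified, and your common-core argument closes it correctly.
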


As a straightforward consequence of the representation theorem
one can state the following result which connects inequalities
between nonnegative selfadjoint relations with inequalities
between the corresponding nonnegative closed forms.

\begin{corollary}\label{cor}
Let $\st_1$ and $\st_2$ be closed nonnegative forms
and let $H_1$ and $H_2$ be the corresponding nonnegative
selfadjoint relations. Then
\begin{equation}\label{mainineq}
\st_1 \leq \st_2 \quad {\text{if and only if}}
\quad H_1 \leq H_2.
\end{equation}
\end{corollary}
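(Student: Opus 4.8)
The plan is to show that, under the correspondence of Theorem~\ref{repre}, the defining conditions of the form inequality \eqref{ineq0} and the relation inequality \eqref{ineq1} become literally the same statements once everything is expressed through the operator parts $H_{1\s}^\half$ and $H_{2\s}^\half$. The tool that carries all the content is the second representation theorem recorded in the preceding corollary, which identifies each form with the squared graph contribution of the square root of the operator part of its associated relation.

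First I would apply that corollary to each pair $(\st_i,H_i)$, $i=1,2$, obtaining
\[
 \dom \st_i = \dom H_{i\s}^\half
 \quad\text{and}\quad
 \st_i[h] = \| H_{i\s}^\half h \|^2, \quad h \in \dom \st_i.
\]
Next I would translate $\st_1 \leq \st_2$ through these identifications. By \eqref{ineq0} this inequality asserts $\dom \st_2 \subset \dom \st_1$ together with $\st_1[h] \leq \st_2[h]$ for all $h \in \dom \st_2$; substituting the formulas above, the domain inclusion turns into $\dom H_{2\s}^\half \subset \dom H_{1\s}^\half$, and the pointwise inequality turns into $\| H_{1\s}^\half h \|^2 \leq \| H_{2\s}^\half h \|^2$ for every $h \in \dom H_{2\s}^\half$. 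Since both sides are nonnegative, taking square roots yields exactly the two requirements in \eqref{ineq1}, namely $\dom H_{2\s}^\half \subset \dom H_{1\s}^\half$ and $\| H_{1\s}^\half h \| \leq \| H_{2\s}^\half h \|$ on $\dom H_2^\half$, which is precisely $H_1 \leq H_2$. As every step is an equivalence, the converse direction needs no separate argument.

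I do not expect a genuine obstacle here: the entire difficulty has already been absorbed into the second representation theorem, and what remains is a dictionary translation between \eqref{ineq0} and \eqref{ineq1} together with the elementary fact that a nonnegative inequality is preserved under squaring and extracting square roots. The only point that calls for a line of care is the consistency of the notation $\dom H_2^\half = \dom H_{2\s}^\half$ appearing in \eqref{ineq1}, which follows from $\mul H^\half = \mul H$ and $(H^\half)_\s = (H_\s)^\half$ as noted after Lemma~\ref{sqrt}; with that remark in place the two inequalities are seen to coincide verbatim.
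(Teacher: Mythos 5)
Your proof is correct and follows exactly the route the paper intends: the paper states Corollary~\ref{cor} without proof as ``a straightforward consequence of the representation theorem,'' and your argument supplies precisely the expected dictionary translation via the second representation theorem ($\dom \st_i = \dom H_{i\s}^{\half}$ and $\st_i[h] = \|H_{i\s}^{\half}h\|^2$), under which \eqref{ineq0} and \eqref{ineq1} coincide verbatim. Your closing remark on the unambiguity of $\dom H^{\half} = \dom H_{\s}^{\half}$ is the right point of care and is justified by the discussion following Lemma~\ref{sqrt}.
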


\begin{corollary}\label{repr+}
Let $\sH$, $\sH_1$, and $\sH_2$ be Hilbert spaces.
Let $T_1$   be a closed linear relation from
$\sH$ into $\sK_1$
and let $T_2$   be a closed linear relation from
$\sH$ into $\sK_2$.
Then $T_1^*T_1 \leq T_2^*T_2$ if and only if
\[
    \dom T_2 \subset \dom T_1 \quad \mbox{and} \quad
    \|(T_1)_\s h\|_{\sK_1} \leq \|(T_2)_\s h\|_{\sK_2} , \quad h \in \dom T_2.
\]
\end{corollary}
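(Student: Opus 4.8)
The plan is to reduce the claimed equivalence entirely to the form-theoretic results already established, namely Proposition~\ref{cor+} and Corollary~\ref{cor}, so that no new analysis is required. First I would attach to each relation its associated form: by Proposition~\ref{cor+} the nonnegative selfadjoint relation $T_i^*T_i$ corresponds to the closed nonnegative form $\st_i[h,k]=((T_i)_\s h,(T_i)_\s k)_{\sK_i}$ whose domain is $\dom \st_i=\dom (T_i)_\s=\dom T_i$, for $i=1,2$. In particular the associated quadratic forms are $\st_i[h]=\|(T_i)_\s h\|^2_{\sK_i}$.

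Next I would invoke Corollary~\ref{cor}, which states that $T_1^*T_1\le T_2^*T_2$ holds as an inequality of nonnegative selfadjoint relations if and only if $\st_1\le\st_2$ holds as an inequality of the corresponding forms. It then remains only to unwind the definition \eqref{ineq0} of the form inequality: $\st_1\le\st_2$ means $\dom \st_2\subset\dom \st_1$ together with $\st_1[h]\le\st_2[h]$ for all $h\in\dom \st_2$. Translating these two conditions through the identifications $\dom \st_i=\dom T_i$ and $\st_i[h]=\|(T_i)_\s h\|^2_{\sK_i}$ gives precisely $\dom T_2\subset\dom T_1$ and $\|(T_1)_\s h\|_{\sK_1}\le\|(T_2)_\s h\|_{\sK_2}$ for $h\in\dom T_2$, where passing between the squared and unsquared inequalities is harmless since both sides are nonnegative. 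This closes the chain of equivalences in both directions at once.

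Since the argument is a direct concatenation of two already-proved results, there is essentially no analytic obstacle; the only point that requires care is the bookkeeping on domains and target spaces. Specifically, one must remember that the form associated with $T_i^*T_i$ has domain $\dom T_i$ (rather than $\dom T_i^*T_i$) and is built from the operator part $(T_i)_\s$ mapping into $\sK_i$, and that the relation inequality $\st_1\le\st_2$ reverses the inclusion to $\dom \st_2\subset\dom \st_1$. Once these identifications are made explicit, the equivalence is immediate.
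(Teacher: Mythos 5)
Your proposal is correct and follows exactly the paper's own argument: identify the forms associated with $T_i^*T_i$ via Proposition~\ref{cor+}, apply Corollary~\ref{cor} to convert the relation inequality into the form inequality, and unwind the definition \eqref{ineq0}. Nothing to add.
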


\begin{proof}
Let $\st_1$ and $\st_2$ be the closed nonnegative forms in the Hilbert space $\sH$
induced by $T_1^*T_1$ and $T_2^*T_2$.
Hence by Corollary \ref{cor} one has $T_1^*T_1 \leq T_2^*T_2$ if and only if
$\st_1 \leq \st_2$. By Proposition \ref{cor+} $\st_1 \leq \st_2$ if and only if
\[
\dom T_2 \subset \dom T_1, \quad 
((T_{1})_\s h,(T_{1})_\s h)_{\sK_1}  \leq ((T_{2})_\s h,(T_{2})_\s h)_{\sK_2} ,
\quad  h \in \dom T_2. \qedhere
\]
\end{proof}

\section{The lemma of Douglas in the context of linear relations}

In this section the lemma of Douglas, see Introduction, will be
discussed in the context of linear relations. The first result
to be presented is about range inclusion and factorisation. It goes
back to D. Popovici and Z. Sebesty\'en \cite{PS}, who stated it
actually in the context of linear spaces. Some refinements can be
found in \cite{SS}.

\begin{proposition}\label{thm1}
Let $\sH_A$, $\sH_B$, and $\sH$ be Hilbert spaces, let $A$ be a
linear relation from $\sH_A$ to $\sH$, and let $B$ be a linear
relation from $\sH_B$ to $\sH$. Then $\ran A \subset \ran B$ if and
only if there exists a linear relation $W$ from $\sH_A$ to $\sH_B$
such that $A \subset BW$.
\end{proposition}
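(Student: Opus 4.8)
The plan is to establish the two implications separately, the forward one being immediate and the reverse one requiring a concrete construction of $W$.

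First I would treat the implication that factorization forces the range inclusion. Assuming $A \subset BW$ for some relation $W$ from $\sH_A$ to $\sH_B$, I would show $\ran (BW) \subset \ran B$ directly from the definition of the product in \eqref{prod}: any $\{x,y\} \in BW$ arises from some $z \in \sH_B$ with $\{x,z\} \in W$ and $\{z,y\} \in B$, whence $y \in \ran B$. Combined with $\ran A \subset \ran (BW)$, which is immediate from $A \subset BW$, this yields $\ran A \subset \ran B$.

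For the converse the crux is choosing the right $W$. I would take $W = B^{-1}A$, the product of the formal inverse $B^{-1}$, a relation from $\sH$ to $\sH_B$, with $A$, a relation from $\sH_A$ to $\sH$; by the product rule this is a linear relation from $\sH_A$ to $\sH_B$, exactly as required, and no separate argument is needed to see that $W$ is a relation, since products of linear relations are automatically linear. It then remains to verify $A \subset BW = BB^{-1}A$. I would do this elementwise: given $\{x,y\} \in A$, the hypothesis $\ran A \subset \ran B$ furnishes some $z \in \sH_B$ with $\{z,y\} \in B$; then $\{x,z\} \in B^{-1}A = W$ (witnessed by the same $y$, since $\{x,y\} \in A$ and $\{y,z\} \in B^{-1}$), and this together with $\{z,y\} \in B$ gives $\{x,y\} \in BW$.

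The argument carries essentially no analytic difficulty; the only thing to get right is the bookkeeping of composition orders and of the directions of the relations, namely which space each component lives in, together with the observation that one must pass to the inverse $B^{-1}$ rather than to $B$ itself. I therefore expect the main point to be simply identifying $W = B^{-1}A$ as the correct, in fact maximal, candidate; once it is chosen, both the linearity of $W$ and the inclusion $A \subset BW$ follow mechanically, and one may alternatively phrase the verification through the identity $BB^{-1}=I_{\ran B}\hplus(\{0\}\times\mul B)$ recorded in \eqref{BBinv}.
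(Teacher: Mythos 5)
Your proposal is correct and follows essentially the same route as the paper: both implications are verified elementwise from the definition of the product, and the converse direction uses exactly the paper's choice $W=B^{-1}A$ with the same witnessing computation. No gaps.
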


\begin{proof}
($\Rightarrow$) Let the linear relation $W$ from $\sH_A$
to $\sH_B$ be defined by the product
\[
W=B^{-1}A.
\]
Let $\{f,f'\} \in A$. Then $f' \in \ran A$, so that $f' \in \ran B$
and there exists
$\varphi \in \sH_B$ such that $\{\varphi, f'\} \in B$
or $\{f', \varphi\} \in B^{-1}$.
Hence $\{f,\varphi\} \in W$ and $\{f,f'\} \in BW$.

($\Leftarrow$) Let $f' \in \ran A$, then for some $f \in \sH_A$
one has $\{f,f'\} \in A$.
Hence there is $\varphi \in \sH_B$ such that $\{f, \varphi\} \in W$
and $\{\varphi,f'\} \in B$.
This implies that $f' \in \ran B$.
\end{proof}

For the next result, see \cite[Proposition~2]{SS}; for
completeness a short proof is presented.

\begin{proposition}
Let $\sH_A$, $\sH_B$, and $\sH$ be Hilbert spaces, let $A$ be a
linear relation from $\sH_A$ to $\sH$, and let $B$ be a linear
relation from $\sH_B$ to $\sH$. Then there exists a linear relation
$W$ from $\sH_A$ to $\sH_B$ such that $A = BW$ if and only if
\[
 \ran A \subset \ran B \quad \text{and}\quad \mul B \subset \mul A.
\]
\end{proposition}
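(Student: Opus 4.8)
The plan is to build directly on Proposition \ref{thm1} and to isolate the exact role played by the extra hypothesis $\mul B \subset \mul A$: this is precisely what upgrades the inclusion $A \subset BW$, already available from Proposition \ref{thm1}, to the equality $A = BW$.

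For the necessity direction I would assume $A = BW$ for some relation $W$ from $\sH_A$ to $\sH_B$. Then in particular $A \subset BW$, so the ($\Leftarrow$) part of Proposition \ref{thm1} (or simply the observation $\ran A = \ran BW \subset \ran B$) gives $\ran A \subset \ran B$. For the multivalued part I would invoke the general inclusion $\mul H_2 \subset \mul H_2 H_1$ recorded right after \eqref{mulAB}; applied with $H_2 = B$ and $H_1 = W$ it yields $\mul B \subset \mul BW = \mul A$. This direction is therefore immediate once the two cited facts are in hand.

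For the sufficiency direction I would assume $\ran A \subset \ran B$ and $\mul B \subset \mul A$ and take the same witness as in Proposition \ref{thm1}, namely $W = B^{-1}A$. That proposition already delivers $A \subset BW$, so the whole content of the statement is the reverse inclusion $BW \subset A$. To establish it I would unwind the product $BW = B(B^{-1}A)$ twice: a pair $\{f,f'\} \in BW$ means there exist $\varphi \in \sH$ and $\eta \in \sH_B$ with $\{f,\varphi\} \in A$, $\{\eta,\varphi\} \in B$, and $\{\eta,f'\} \in B$. Subtracting the two memberships in $B$ with the common first entry $\eta$ gives $\{0, f'-\varphi\} \in B$, i.e. $f'-\varphi \in \mul B$. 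This is exactly where the hypothesis enters: $\mul B \subset \mul A$ forces $\{0,f'-\varphi\} \in A$, and adding this to $\{f,\varphi\} \in A$ produces $\{f,f'\} \in A$. Hence $BW \subset A$, and combined with $A \subset BW$ this gives $A = BW$.

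The remaining steps are routine bookkeeping: associativity of the product of relations (so that $B(B^{-1}A)$ may be read off directly) and tracking which auxiliary vector lives in which space. The single load-bearing step is the reverse inclusion in the sufficiency direction, where one must recognize that the discrepancy between $A$ and $BW$ is measured precisely by $\mul B$ relative to $\mul A$; this is what makes the condition $\mul B \subset \mul A$ at once necessary and sufficient.
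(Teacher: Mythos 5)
Your proposal is correct and follows essentially the same route as the paper: the necessity direction is identical (using $\mul B\subset\mul BW=\mul A$ and Proposition \ref{thm1}), and the sufficiency direction uses the same witness $W=B^{-1}A$ with $A\subset BW$ from Proposition \ref{thm1}. The only cosmetic difference is in how you finish: you verify the reverse inclusion $BW\subset A$ by a direct element chase through $B(B^{-1}A)$, whereas the paper computes $BW=\bigl(I_{\ran B}\hplus(\{0\}\times\mul B)\bigr)A$ via \eqref{BBinv}, checks $\dom BW=\dom A$ and $\mul BW=\mul A$, and then invokes Corollary \ref{arens1}; both arguments hinge on exactly the same observation that the gap between $BB^{-1}A$ and $A$ is measured by $\mul B$.
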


\begin{proof}
($\Rightarrow$) It follows from \eqref{mulAB} that $\mul B\subset
\mul BW= \mul A$ while $\ran A \subset \ran B$ holds by 
Proposition \ref{thm1}.

($\Leftarrow$) As in the proof of Proposition \ref{thm1} consider
$W=B^{-1}A$ which satisfies $A \subset BW$. In view of \eqref{BBinv}
one can write
\begin{equation}\label{BW2}
 BW=BB^{-1}A=\left(I_{\ran B} \hplus (\{0\}\times \mul B) \right)A.
\end{equation}
Since $\ran A\subset \ran B$, it is clear from \eqref{BW2} that
$\dom BW=\dom A$ and since $\mul B \subset \mul A$ one also
concludes from \eqref{BW2} that $\mul BW=\mul A$. Therefore, the
equality $BW=A$ holds by Corollary \ref{arens1}.
\end{proof}

Observe that if $W$ is a linear relation from $\sH_A$ to $\sH_B$, 
then the inclusion $A \subset BW$ shows that 
\[
\dom A \subset \dom W \quad \mbox{and} \quad \ran A \subset \ran B.
\] 
Furthermore, if  $W$ is an operator, then the inclusion
$A \subset BW$ is equivalent to:
\[
\dom A\subset \dom W \quad \mbox{and} \quad
 \{Wf,f'\} \in B \quad \mbox{for all} \quad \{f,f'\} \in A,
\]
so that in particular $W$ takes $\dom A$ into $\dom B$. Hence
when the relation $W$ is a bounded operator then it may be assumed 
that $W \in \mathbf{B}(\cdom A,\cdom B)$. 
In this case the zero continuation $W_c$ of $W$ to
$(\dom A)^\perp$ satisfies $A \subset BW\subset BW_c$ and
$\|W_c\|=\|W\|$, so that without loss of generality it may be assumed that
$W \in \mathbf{B}(\sH_{A},\sH_{B})$. 

\begin{lemma}\label{lemma1}
Let $A \subset BW$ for some $W \in \mathbf{B}(\sH_A,\sH_B)$.
Then
\[
W^{*}B^{*}  \subset A^{*} \,\text{ and }\, A^{**} \subset B^{**}W.
\]
\end{lemma}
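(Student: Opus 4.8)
The plan is to obtain both inclusions by applying the adjoint operation to the hypothesis $A\subset BW$ and then re-applying it, at each stage invoking the product-adjoint relations \eqref{Bbadj0} and \eqref{Bbadj}. The key structural observation is that in the product $BW$ the bounded factor $W$ sits on the inside, whereas in its adjoint the bounded factor $W^*$ sits on the outside; this difference is exactly what controls whether one gets an inclusion or an equality.

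For the first inclusion I would start from $A\subset BW$. Since the adjoint operation reverses inclusions (because $H^*=JH^\perp$ and $\perp$ is order-reversing), this yields $(BW)^*\subset A^*$. Writing $BW$ as the product $H_2H_1$ with $H_1=W$ (from $\sH_A$ to $\sH_B$) and $H_2=B$ (from $\sH_B$ to $\sH$), the general inclusion \eqref{Bbadj0} gives $W^*B^*=H_1^*H_2^*\subset (H_2H_1)^*=(BW)^*$. Chaining these two inclusions produces $W^*B^*\subset A^*$, which is the first assertion. Note that here only the inclusion \eqref{Bbadj0} is available, since the bounded factor $W$ is the inner factor $H_1$, not the outer factor $H_2$.

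For the second inclusion I would apply the adjoint to the first one, $W^*B^*\subset A^*$, again using that $*$ reverses inclusions, to obtain $A^{**}\subset (W^*B^*)^*$. Now $W^*B^*$ is the product $H_2H_1$ with $H_1=B^*$ and $H_2=W^*$, and this time the bounded factor is the outer one: since $W\in\mathbf{B}(\sH_A,\sH_B)$ we have $W^*\in\mathbf{B}(\sH_B,\sH_A)$, so the equality \eqref{Bbadj} applies and gives $(W^*B^*)^*=(B^*)^*(W^*)^*=B^{**}W^{**}$. Because $W$ is bounded and everywhere defined it is closed, so $W^{**}=W$, and therefore $A^{**}\subset B^{**}W$, as claimed.

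The only delicate point, and the step I would watch most carefully, is the bookkeeping of which factor must be bounded in the product-adjoint identity: the equality \eqref{Bbadj} requires the outer (left) factor to lie in $\mathbf{B}$. It is precisely the passage from $BW$ to its adjoint that moves the bounded operator from the inner to the outer position, turning the mere inclusion of the first step into the equality needed for the second. Everything else is a routine use of the order-reversing property of the adjoint and the identity $W^{**}=W$ for bounded everywhere-defined operators.
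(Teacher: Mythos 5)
Your argument is correct and is essentially the paper's own proof: first $W^*B^*\subset (BW)^*\subset A^*$ via \eqref{Bbadj0} and the order-reversal of the adjoint, then $A^{**}\subset (W^*B^*)^*=B^{**}W^{**}=B^{**}W$ via \eqref{Bbadj} with the bounded factor $W^*$ now in the outer position. Your remark about which factor must be bounded for the equality \eqref{Bbadj} is exactly the point the paper's proof relies on.
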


\begin{proof}
Clearly $A \subset BW$ implies via \eqref{Bbadj0} that
\[
   W^{*}B^{*} \subset  (BW)^{*} \subset A^{*}.
\]
This inclusion combined with $W^* \in \mathbf{B}(\sH_B, \sH_A)$ and
\eqref{Bbadj} in turn gives rise to
\[
 A^{**} \subset (W^{*}B^{*})^{*}=B^{**}W^{**}=B^{**}W. \qedhere
\]
\end{proof}

The main result in this section concerns factorization and
majorization. If $A$ and $B$ are closed linear relations, then the
case that $W \in \mathbf{B}(\sH_A,\sH_B)$ can be characterized as
follows; see also \cite{AH,D}.

\begin{theorem}\label{een}
Let $\sH_A$, $\sH_B$, and $\sH$ be Hilbert spaces, let $A$ be a
closed linear relation from $\sH_A$ to $\sH$, and let $B$ be a
closed linear relation from $\sH_B$ to $\sH$. Then there exists an
operator $W \in \mathbf{B}(\sH_A,\sH_B)$ {{\rm (}}or equivalently an
operator $W \in \mathbf{B}(\cdom A,\cdom B)${\rm{)}} such that
\begin{equation}\label{ABC}
 A \subset BW,
\end{equation}
if and only if there exists $c \ge 0$ such that
\begin{equation}\label{AB}
 AA^{*} \le c^{2} BB^{*}.
\end{equation}
One can take $\|W\| \le c$.
\end{theorem}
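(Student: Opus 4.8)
The plan is to translate both conditions into statements about the operator parts of $A^*$ and $B^*$, reducing the theorem to the construction of a single bounded operator. Since $A$ and $B$ are closed, one has $A^{**}=A$ and $B^{**}=B$, so that $AA^*=(A^*)^*A^*$ and $c^2BB^*=(cB^*)^*(cB^*)$. Applying Corollary~\ref{repr+} to the closed relations $T_1=A^*$ and $T_2=cB^*$ therefore rephrases the majorization \eqref{AB} (for $c>0$) as
\[
 \dom B^* \subset \dom A^*, \qquad \|(A^*)_\s h\| \le c\,\|(B^*)_\s h\|, \quad h \in \dom B^*.
\]
The case $c=0$, where this would force $(A^*)_\s=0$, is degenerate and can be treated directly with $W=0$, so I may assume $c>0$. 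The whole theorem thus reduces to relating the inclusion $A \subset BW$ to this pair of conditions on $A^*$ and $B^*$.

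For the implication \eqref{ABC} $\Rightarrow$ \eqref{AB}, I would start from Lemma~\ref{lemma1}, which turns $A \subset BW$ into the inclusion $W^*B^* \subset A^*$. Since $W^* \in \mathbf{B}(\sH_B,\sH_A)$ is everywhere defined, the product $W^*B^*$ has domain $\dom B^*$, so $W^*B^* \subset A^*$ immediately yields $\dom B^* \subset \dom A^*$. For the norm inequality I would apply $W^*$ to the operator-part element $\{h,(B^*)_\s h\} \in B^*$: this gives $\{h, W^*(B^*)_\s h\} \in W^*B^* \subset A^*$, so that $W^*(B^*)_\s h = (A^*)_\s h + \nu$ with $\nu \in \mul A^*$. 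Because $(A^*)_\s h \perp \mul A^*$, the Pythagorean identity gives $\|(A^*)_\s h\| \le \|W^*(B^*)_\s h\| \le \|W\|\,\|(B^*)_\s h\|$; with $c=\|W\|$ this is exactly the required inequality, and Corollary~\ref{repr+} then delivers \eqref{AB}.

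For the converse \eqref{AB} $\Rightarrow$ \eqref{ABC}, I would build $W$ out of the two conditions above. The norm inequality shows that the assignment $(B^*)_\s h \mapsto (A^*)_\s h$ is well defined and bounded by $c$ on $\ran(B^*)_\s$ (if $(B^*)_\s h=0$ then $\|(A^*)_\s h\| \le c\cdot 0=0$), so it extends by continuity to $\cran(B^*)_\s$ and by zero on its orthogonal complement to an operator $\bar V \in \mathbf{B}(\sH_B,\sH_A)$ with $\|\bar V\| \le c$; set $W=\bar V^* \in \mathbf{B}(\sH_A,\sH_B)$, so that $\|W\|=\|\bar V\| \le c$. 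The crux is to verify $\bar V B^* \subset A^*$. Given $\{h,\psi\} \in B^*$, write $\psi=(B^*)_\s h+\mu$ with $\mu \in \mul B^*$; since $\mul B^* \perp \ran(B^*)_\s$, the operator $\bar V$ annihilates $\mu$, whence $\bar V\psi=(A^*)_\s h$, and $\{h,\bar V\psi\}=\{h,(A^*)_\s h\} \in A^*$ because $h \in \dom B^* \subset \dom A^*$. As $W^*=\bar V$, this reads $W^*B^* \subset A^*$; taking adjoints and using \eqref{Bbadj} together with the closedness of $A$ and $B$ gives $A=(A^*)^* \subset (W^*B^*)^*=BW$, which is \eqref{ABC} with $\|W\| \le c$.

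The main obstacle I anticipate is the bookkeeping with multivalued parts in the reverse direction: one must set up $\bar V$ precisely so that it kills $\mul B^*$, which is exactly what upgrades $\bar V (B^*)_\s \subset A^*$ to the full inclusion $\bar V B^* \subset A^*$, and one must ensure that the passage from $W^*B^* \subset A^*$ back to $A \subset BW$ is legitimate — this is where the closedness of $A$ and $B$ and the product-adjoint equality \eqref{Bbadj} (valid because $W^*$ is bounded) are essential.
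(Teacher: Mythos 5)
Your proposal is correct and follows essentially the same route as the paper: the forward direction is the paper's argument verbatim (Lemma~\ref{lemma1}, then the Pythagorean estimate $\|(A^{*})_{\s}f\|\le\|W^{*}(B^{*})_{\s}f\|$, then Corollary~\ref{repr+}), and the converse builds the same contraction $(B^{*})_{\s}f\mapsto(A^{*})_{\s}f$. The only (harmless) difference is organizational: you arrange $\bar V$ to annihilate $\mul B^{*}$ so as to verify $W^{*}B^{*}\subset A^{*}$ entirely on the adjoint side and then take a single adjoint via \eqref{Bbadj}, whereas the paper dualizes the reduced operators first and checks $A\subset BW_{0}$ directly using $\mul A\subset\mul B$.
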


\begin{proof}
($\Rightarrow$) Let $A \subset BW$ with 
$W \in \mathbf{B}(\cdom A,\cdom B)$.
By considering the zero continuation of $W$, 
again denoted by $W$, it may be assumed
that $W \in \mathbf{B}(\sH_A,\sH_B)$. Then
\begin{equation}\label{ABC1}
 W^{*}B^{*} \subset A^{*},
\end{equation}
cf. Lemma \ref{lemma1}. In particular this implies that $\dom B^{*}
\subset \dom A^{*}$. Now let $\{f,f'\} \in (B^{*})_{\s} \subset
B^{*}$. Then it follows from \eqref{ABC1} that
\[
 \{f,W^{*}f'\} \in A^{*}.
\]
Hence there is an element $\chi \in \mul A^{*}$ such that
\[
 W^{*}(B^{*})_{\s}f=(A^{*})_{\s}f +\chi.
\]
Observe that
\[
\|(A^{*})_{\s}f\|^{2} \le  \|(A^{*})_{\s}f\|^{2} +\|\chi\|^{2}
=\|W^{*}(B^{*})_{\s}f\|^{2}
\le \|W\|^2 \|(B^{*})_{\s}f\|^{2}.
\]
Together with $\dom B^{*} \subset \dom A^{*}$ this inequality
proves \eqref{AB};
see Corollary \ref{repr+}.
 
($\Leftarrow$)  Assume that \eqref{AB} holds, in other words,
assume that there exists $c \ge 0$ such that
\begin{equation}\label{AB+}
\dom B^{*} \subset \dom A^{*}, \quad  c\, \|(B^{*})_{\s}f\| \ge
\|(A^{*})_{\s}f\|, \quad f \in \dom B^{*}.
\end{equation}
Consider $A_{\s}$ as a densely defined operator from
$\cdom A$ to $(\mul A)^{\perp}$
and $B_{\s}$ as a densely defined operator from
$\cdom B$ to $(\mul B)^{\perp}$.
Then the assumption \eqref{AB+} is equivalent to
\begin{equation}\label{AB2}
\dom B^{*} \subset \dom A^{*}, \quad c \,\|(B_{\s})^{\times}f\| \ge
\|(A_{\s})^{\times}f\|, \quad f \in \dom B^{*},
\end{equation}
where the adjoints $(A_{\s})^{\times}$ and $(B_{\s})^{\times}$ are
with respect to these smaller spaces; see \eqref{Xadj}. Define the
linear relation $D$ by
\[
 D=\{\{(B_{s})^{\times}f, (A_{\s})^{\times}f\} :\, f \in \dom B^{*} \}.
\]
Then by \eqref{AB2} $D$ is a bounded operator from $\cdom B$ to
$\cdom A$ with $\|D\| \le c$. It has a unique extension, again
denoted by $D$, from $\cdom B$ to $\cdom A$ with $\|D\| \le c$, such
that
\[
 D (B_{\s})^{\times} \subset (A_{\s})^{\times},
\]
or taking adjoints, using \eqref{Bbadj},
\begin{equation}\label{preli}
 A_{\s}=(A_\s)^{\times \times} \subset (D (B_{\s})^{\times})^\times
 = (B_{\s})^{\times \times}D^\times=B_\s W_{0},
\end{equation}
where $W_0=D^{\times}$ is a bounded linear operator from $\cdom A$
to $\cdom B$, with $\|W_0\| =\|D\| \le c$. Observe that the
inclusion  $\dom B^{*} \subset \dom A^{*}$ implies that
\begin{equation}\label{mula}
 \mul A \subset \mul B.
\end{equation}
Now let $\{f,f'\} \in A$, so that $f'=A_{\s}f+\varphi$ with $\varphi \in \mul A$.
By \eqref{mula} one has  $\varphi \in \mul B$. By \eqref{preli} the inclusion
$\{f, A_{\s}f\} \in A_{\s}$ implies that
\[
 \{f,W_0f\} \in W_0, \quad \{W_0f, A_{\s}f \} \in B_{\s},
\]
and, hence
\[
 \{W_0f, A_{\s}f +\varphi\} \in B.
\]
Therefore one concludes that $\{f,f'\} \in BW_0$, i.e., $A \subset
BW_0$ holds with $W_{0} \in \mathbf{B}(\cdom A, \cdom B)$.
Finally, let $W$ be the zero continuation of $W_0$ to
$(\dom A)^\perp$. Then $W\in \mathbf{B}(\sH_A,\sH_B)$ with
$\|W\|=\|W_0\|$ and, moreover, the inclusion $A\subset BW$ is
satisfied.
\end{proof}

In particular, the equivalences $AA^*\le BB^*$ $\Leftrightarrow$
$W^{*}B^*\subset A^*$ $\Leftrightarrow$ $A\subset BW$ with $\|W\|\le
1$ can be found in \cite[Proposition~2.2, Remark~2.3]{AH}. For
densely defined operators $A$ and $B$ the implication $AA^*\le BB^*$
$\Rightarrow$ $A\subset BW$, $\|W\|\le 1$, can be found in
\cite[Theorem~2]{D}. \\

The following two corollaries are variations on the theme of
Theorem \ref{een}.

\begin{corollary}\label{cornew1+}
Let $A$ and $B$ be closed linear relations as in Theorem~\ref{een}
and, in addition,
let $T\in \mathbf{B}(\sK, \sH_A)$ with $\sK$ a Hilbert space. Then
\[
  A A^{*} \leq c^{2} B B^{*} \quad \Rightarrow \quad
  AT\overline{T^{*}A^{*}} \leq c^{2} \|T\|^2 BB^{*},
\]
where $c\geq 0$.
In particular,
\[
BT\overline{T^{*}B^{*}}\le \|T\|^2BB ^{*}
\]
holds for every $T\in \mathbf{B}(\sK, \sH_A)$.
\end{corollary}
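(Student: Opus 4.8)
The plan is to reduce the claim to Theorem~\ref{een} applied to the closed relation $AT$ in place of $A$. First I would record two preliminary facts. Since $T\in\mathbf{B}(\sK,\sH_A)$ is everywhere defined and bounded and occurs as the inner factor, the product $AT$ is again a closed linear relation from $\sK$ to $\sH$, by the closedness remark in Section~2. Next I would verify that $\overline{T^{*}A^{*}}=(AT)^{*}$, so that the left-hand side of the asserted inequality is in fact the nonnegative selfadjoint relation $(AT)(AT)^{*}$ in $\sH$ to which Theorem~\ref{een} may be applied. One inclusion is immediate: by \eqref{Bbadj0} one has $T^{*}A^{*}\subset(AT)^{*}$, and since $(AT)^{*}$ is closed this gives $\overline{T^{*}A^{*}}\subset(AT)^{*}$. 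For the reverse inclusion I would apply the equality \eqref{Bbadj} with the bounded outer factor $T^{*}\in\mathbf{B}(\sH_A,\sK)$, obtaining $(T^{*}A^{*})^{*}=A^{**}T^{**}=AT$ (using that $A$ is closed and $T$ is bounded and everywhere defined); taking adjoints yields $(AT)^{*}=(T^{*}A^{*})^{**}=\overline{T^{*}A^{*}}$.

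Assuming $AA^{*}\le c^{2}BB^{*}$, the ($\Leftarrow$) direction of Theorem~\ref{een} furnishes an operator $W\in\mathbf{B}(\sH_A,\sH_B)$ with $A\subset BW$ and $\|W\|\le c$. Right-multiplying this inclusion by $T$ and using associativity of the product gives $AT\subset(BW)T=B(WT)$. Setting $V=WT\in\mathbf{B}(\sK,\sH_B)$, one has $AT\subset BV$ with $\|V\|\le\|W\|\,\|T\|\le c\|T\|$.

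Finally I would invoke the ($\Rightarrow$) direction of Theorem~\ref{een}, now for the closed relations $AT$ and $B$: the inclusion $AT\subset BV$ with $V\in\mathbf{B}(\sK,\sH_B)$ yields $(AT)(AT)^{*}\le\|V\|^{2}BB^{*}$. Since $\|V\|^{2}\le c^{2}\|T\|^{2}$, monotonicity of scaling together with transitivity of the ordering on nonnegative selfadjoint relations gives $(AT)(AT)^{*}\le c^{2}\|T\|^{2}BB^{*}$, which by the identification of the first paragraph is exactly $AT\overline{T^{*}A^{*}}\le c^{2}\|T\|^{2}BB^{*}$. The displayed special case then follows by taking $A=B$ and $c=1$, since $BB^{*}\le BB^{*}$ holds trivially.

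The one step that needs genuine care is the identification $\overline{T^{*}A^{*}}=(AT)^{*}$: it is what turns the left-hand side into a bona fide product $CC^{*}$ with $C=AT$ closed, and hence into an object Theorem~\ref{een} can handle. It relies on having the bounded factor $T^{*}$ in the outer position, so that the equality \eqref{Bbadj}, rather than merely the inclusion \eqref{Bbadj0}, is available. Everything else is a routine application of Theorem~\ref{een} and of the monotonicity and transitivity of the form ordering recorded via Corollary~\ref{cor}.
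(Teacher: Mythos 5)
Your proposal is correct and follows essentially the same route as the paper: obtain $A\subset BW$ from Theorem~\ref{een}, multiply on the right by $T$ to get $AT\subset B(WT)$ with $AT$ closed and $WT$ bounded, apply Theorem~\ref{een} again, and identify $(AT)^{*}=\overline{T^{*}A^{*}}$ via \eqref{Bbadj}. The only difference is that you spell out the adjoint identification in more detail than the paper's one-line computation.
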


\begin{proof}
Assume that $ AA^{*} \leq c^{2} B B^{*}$, which
by Theorem \ref{een}  is equivalent to  the inclusion $A \subset BW$.
Therefore it follows that
\[
A T \subset BWT.
\]
Observe that $AT$ is closed and that $WT$ is bounded.
Hence again by Theorem \ref{een} one obtains
\[
AT (AT)^{*} \le \|WT\|^{2} BB^{*}
\]
Now observe that \eqref{Bbadj} shows that
\[
 (AT)^{*}= ((T^{*}A^{*})^{*})^{*}= \overline{T^{*}A^{*}}
\]
Hence this leads to
\[
 AT \overline{T^{*}A^{*}} \le c_{T}^{2} B B^{*},
\]
where one can take $c_T=\|WT\|\le c\|T\|$.
The last statement follows from the first one with
the choices $A=B$ and $c=1$.
\end{proof}

\begin{corollary}\label{cornew0}
Let $A$ and $B$ be closed linear relations
as in Theorem~\ref{een} and
let $T$ be a linear relation from the Hilbert space $\sH$
to the Hilbert space $\sK$.
Then
\begin{equation}\label{impl1}
 AA^{*} \le c^{2} BB^{*} \quad \Rightarrow \quad
 \overline{TA}(TA)^* \leq c^{2} \overline{TB}(TB)^*,
\end{equation}
where $c\geq 0$. In particular, if $T\in \mathbf{B}(\sH,\sK)$ then
\[
 AA^{*} \le c^{2} BB^{*} \quad \Rightarrow
 \quad \overline{TA}A^*T^* \leq c^{2} \overline{TB}B^*T^*.
\]
\end{corollary}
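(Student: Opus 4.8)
The plan is to reduce the inequality hypothesis to the factorization inclusion supplied by Theorem~\ref{een}, transport that inclusion through left multiplication by $T$, and then read off the conclusion by applying Theorem~\ref{een} once more to the closed relations $\overline{TA}$ and $\overline{TB}$.

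First I would invoke Theorem~\ref{een}: the hypothesis $AA^*\le c^2 BB^*$ is equivalent to the existence of an operator $W\in\mathbf{B}(\sH_A,\sH_B)$ with $\|W\|\le c$ such that $A\subset BW$. Since left multiplication by $T$ is monotone and the product of relations is associative, this yields
\[
 TA\subset T(BW)=(TB)W.
\]
Now $W$ is a bounded everywhere defined operator and $\overline{TB}$ is closed, so by the remark in Section~2 (the product is closed when its right factor is bounded) the relation $\overline{TB}\,W$ is closed. It clearly contains $(TB)W$, hence contains $TA$, and being closed it therefore contains the closure:
\[
 \overline{TA}\subset\overline{TB}\,W.
\]

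At this stage $\overline{TA}$ and $\overline{TB}$ are closed linear relations (from $\sH_A$, respectively $\sH_B$, into $\sK$) and $W\in\mathbf{B}(\sH_A,\sH_B)$ with $\|W\|\le c$, so the direction of Theorem~\ref{een} that passes from an inclusion to an inequality applies and gives
\[
 \overline{TA}\,(\overline{TA})^*\le c^2\,\overline{TB}\,(\overline{TB})^*.
\]
Because the adjoint of a relation coincides with the adjoint of its closure, i.e. $H^*=H^{***}=(\overline H)^*$, one may replace $(\overline{TA})^*$ by $(TA)^*$ and $(\overline{TB})^*$ by $(TB)^*$, which is precisely \eqref{impl1}. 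For the final assertion, when $T\in\mathbf{B}(\sH,\sK)$ the equality \eqref{Bbadj} gives $(TA)^*=A^*T^*$ and $(TB)^*=B^*T^*$, turning \eqref{impl1} into the stated inequality $\overline{TA}\,A^*T^*\le c^2\,\overline{TB}\,B^*T^*$.

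The substance of the argument is entirely carried by Theorem~\ref{een}; the only delicate points are bookkeeping. One must remember that $TA$ need not be closed even when $T$ is bounded (the closedness remark concerns the right, not the left, factor), which is exactly why the closures $\overline{TA}$ and $\overline{TB}$ occur in the statement. Thus I expect the main care to lie in two places: verifying that $\overline{TB}\,W$ is closed and absorbs $\overline{TA}$, so that Theorem~\ref{een} can be applied in the smaller closed setting; and using the identity $(TA)^*=(\overline{TA})^*$ to match the asymmetric form $\overline{TA}(TA)^*$ appearing in \eqref{impl1}.
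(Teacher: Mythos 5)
Your argument is correct and follows essentially the same route as the paper: pass from $AA^*\le c^2BB^*$ to $A\subset BW$ via Theorem~\ref{een}, multiply by $T$ to get $\overline{TA}\subset\overline{TB}\,W$ using the closedness of $\overline{TB}\,W$, and apply Theorem~\ref{een} again. The only cosmetic difference is that the paper justifies the closedness of $\overline{TB}\,W$ by writing it as the adjoint $(W^*(TB)^*)^*$ via \eqref{Bbadj}, whereas you invoke the remark that a product with a bounded everywhere defined right factor is closed; both are valid.
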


\begin{proof}
Assume that  $AA^{*} \le c^{2} BB^{*}$. Then by Theorem \ref{een}
$A\subset BW$ for some $W \in \mathbf{B}(\sH_A,\sH_B)$ with $\|W\| \le c$.
Hence it follows that
\begin{equation}\label{tabw}
 TA \subset T(BW)=(TB)W \subset \overline{TB} W.
\end{equation}
Due to \eqref{Bbadj} the following identity holds
\[
 \overline{TB}W =(W^*(TB)^*)^*,
\]
which implies that the relation $\overline{TB}W$ is closed.
Therefore  one concludes from \eqref{tabw}
that
\[
 AA^{*} \le c^{2} BB^{*} \quad \Rightarrow \quad
 \overline{TA}\subset \overline{TB}W.
\]
By Theorem \ref{een} this implication can be rewritten
as the implication stated in \eqref{impl1}.
If $T\in \mathbf{B}(\sH,\sK)$ the last statement is obtained
by applying \eqref{Bbadj} to \eqref{impl1}.
\end{proof}

The occurrence of the equality $A = BW$ in Theorem \ref{een} can be
characterized as follows.

\begin{proposition}
Let $\sH_A$, $\sH_B$, and $\sH$ be Hilbert spaces, let $A$ be a
closed linear relation from $\sH_A$ to $\sH$, and let $B$ be a
closed linear relation from $\sH_B$ to $\sH$. Then there exists a
bounded {\rm{(}}not necessarily closed\,{\rm{)}} operator $W$ from $\dom A$ into
$\cdom B$ such that
\begin{equation}\label{A=BC}
 A = BW,
\end{equation}
if and only if the following conditions are satisfied
\begin{enumerate}\def\labelenumi{\rm (\roman{enumi})}
\item  the inequality \eqref{AB} holds for some $c \ge 0$;
\item $\mul A=\mul B$.
\end{enumerate}
\end{proposition}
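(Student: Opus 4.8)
The plan is to prove the two implications separately, using Theorem~\ref{een} to handle the majorization condition (i) and the Arens-type Corollary~\ref{arens1} to upgrade the inclusion $A \subset BW$ coming from that theorem into the desired equality $A = BW$. The equality will be forced by matching domains and multivalued parts, and the multivalued-part bookkeeping is exactly where condition (ii) enters.

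For the direction $(\Rightarrow)$, I would start from $A = BW$ with $W$ a bounded operator from $\dom A$ into $\cdom B$. Extending $W$ first by continuity to $\cdom A$ and then by zero continuation to all of $\sH_A$ produces an operator $\wt W \in \mathbf{B}(\sH_A,\sH_B)$ with $W \subset \wt W$; since the product is monotone in its right factor, $A = BW \subset B\wt W$, and Theorem~\ref{een} then yields the inequality \eqref{AB}, i.e.\ condition (i). Condition (ii) follows from \eqref{mulAB}: because $W$ is an operator one has $\mul W = \{0\}$, whence $\mul A = \mul(BW) = B(\mul W) = B(\{0\}) = \mul B$.

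For the direction $(\Leftarrow)$, assume (i) and (ii). Applying Theorem~\ref{een} to (i) furnishes $W_0 \in \mathbf{B}(\cdom A,\cdom B)$ with $A \subset BW_0$ and $\|W_0\| \le c$. The decisive step is to replace $W_0$ by its restriction $W = W_0 \uphar \dom A$, a bounded (generally non-closed) operator from $\dom A$ into $\cdom B$; the inclusion $A \subset BW$ survives this restriction because every $\{f,f'\} \in A$ has $f \in \dom A = \dom W$. It then remains to promote $A \subset BW$ to equality via Corollary~\ref{arens1}. On the one hand $\dom A \subset \dom BW \subset \dom W = \dom A$ forces $\dom BW = \dom A$; on the other hand \eqref{mulAB} gives $\mul BW = B(\mul W) = \mul B$, which equals $\mul A$ by hypothesis (ii). With $A \subset BW$, $\dom A = \dom BW$, and $\mul A = \mul BW$, Corollary~\ref{arens1} delivers $A = BW$.

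The main obstacle is not a deep one but a genuine bookkeeping subtlety: one must restrict $W_0$ precisely to $\dom A$ rather than keep it on all of $\cdom A$, since on the larger domain $BW_0$ can pick up points outside $A$ and the inclusion $A \subset BW_0$ can be strict. Arranging $\dom BW = \dom A$ exactly, while simultaneously reading off $\mul BW = \mul A$ from \eqref{mulAB} and (ii), is what lets Corollary~\ref{arens1} close the argument; this is also why the statement is phrased with $W$ defined only on $\dom A$ and explicitly allowed to be non-closed.
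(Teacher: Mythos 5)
Your proof is correct and follows essentially the same route as the paper: both directions hinge on Theorem~\ref{een}, the forward direction extends $W$ to a bounded everywhere-defined operator (the paper writes this as $W^{**}$) and reads off $\mul A=\mul B$ from \eqref{mulAB}, and the converse restricts the operator $W_0$ from Theorem~\ref{een} to $\dom A$ and closes the argument with Corollary~\ref{arens1} by matching domains and multivalued parts. No substantive differences.
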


\begin{proof}
($\Rightarrow$) If $A = BW$ holds for some bounded operator $W$ from
$\dom A$ into $\cdom B$, then clearly $A\subset BW^{**}$ and here
$W^{**} \in \mathbf{B}(\cdom A, \cdom B)$, since $\dom A\subset \dom
W$. Now the inequality \eqref{AB} is obtained from Theorem
\ref{een}. Since $W$ is an operator, one obtains $\mul A=\mul
BW=\mul B$; see \eqref{mulAB}.

($\Leftarrow$) The inequality \eqref{AB} implies the existence of
$W_0 \in \mathbf{B}(\cdom A, \cdom B)$ such that $A \subset BW_0$ by
Theorem \ref{een}. Then $\dom A\subset \dom W_0$ and the restriction
$W:=W_0\upharpoonright\dom A$ is a bounded operator such that
$A\subset BW$ and $\dom BW=\dom A$. The second assumption implies
that $\mul BW=\mul B=\mul A$ and hence the equality $A = BW$ follows
from Corollary \ref{arens1}.
\end{proof}

The following result concerns the alternative formulation of the Douglas
lemma which is known in the literature, but now in the context of relations.  
The domain condition is a sufficient condition.

\begin{proposition}\label{NEW}
Let $\sH_A$, $\sH_B$, and $\sH$ be Hilbert spaces, let $A$ be a
closed linear relation from $\sH_A$ to $\sH$, and let $B$ be a
closed linear relation from $\sH_B$ to $\sH$.
Assume that $\dom A^{*}=\dom B^{*}$.
Then the following statements
are equivalent:
\begin{enumerate}\def\labelenumi{\rm (\roman{enumi})}
\item $AA^{*} \le c^{2} BB^{*}$ for some $c \ge 0$;
\item $AA^{*}=BMB^{*}$ for some $0 \le M \in \mathbf{B}(\sH_{B})$
with $\|M\| \le c^{2}$.
\end{enumerate}
\end{proposition}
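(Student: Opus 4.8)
The plan is to run both implications through the majorization result of Theorem~\ref{een}, together with one elementary observation: if $S$ and $T$ are selfadjoint relations with $S\subset T$, then $T=T^*\subset S^*=S$, so that $S=T$. The common device is as follows. For any $0\le M\in\mathbf{B}(\sH_B)$ I would set $R=BM^{1/2}$. Since $M^{1/2}\in\mathbf{B}(\sH_B)$ the product $R$ is closed, so $RR^*$ is a nonnegative selfadjoint relation by Lemma~\ref{T*T} (applied to $R^*$). Moreover \eqref{Bbadj0} gives $M^{1/2}B^*\subset(BM^{1/2})^*=R^*$, and hence by associativity and monotonicity of the product
\[
 BMB^*=(BM^{1/2})(M^{1/2}B^*)\subset RR^*.
\]
Finally $R=BM^{1/2}$ has the form $R\subset BV$ with $V=M^{1/2}$ and $\|V\|=\|M\|^{1/2}$, so Theorem~\ref{een} yields $RR^*\le\|M\|\,BB^*$.

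For (ii) $\Rightarrow$ (i) these facts already suffice. By hypothesis $AA^*=BMB^*$ is selfadjoint and, by the display above, contained in the selfadjoint relation $RR^*$; hence $AA^*=RR^*$ by the observation. Combining this with $RR^*\le\|M\|\,BB^*\le c^2BB^*$ gives $AA^*\le c^2BB^*$, which is (i). Note that this implication does not use the hypothesis $\dom A^*=\dom B^*$.

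For (i) $\Rightarrow$ (ii) I would first invoke Theorem~\ref{een} to obtain $W\in\mathbf{B}(\cdom A,\cdom B)$ with $A\subset BW$ and $\|W\|\le c$; extending $W$ by zero to $\sH_A$ I may assume $W$ vanishes on $(\dom A)^\perp=\mul A^*$, i.e. $\mul A^*\subset\ker W$, equivalently $\ran W^*\subset\cdom A$. Put $M=WW^*$, so that $0\le M\in\mathbf{B}(\sH_B)$ and $\|M\|=\|W\|^2\le c^2$. Since $A\subset BW$, Lemma~\ref{lemma1} gives $W^*B^*\subset A^*$, so for $h\in\dom B^*$ one has $W^*(B^*)_\s h=(A^*)_\s h+\chi$ with $\chi\in\mul A^*$; because $W^*(B^*)_\s h\in\ran W^*\subset\cdom A=(\mul A^*)^\perp$ this forces $\chi=0$, i.e. $W^*(B^*)_\s h=(A^*)_\s h$. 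I would then check $AA^*\subset BMB^*$ directly: given $\{h,h''\}\in AA^*$, write $\{h,p\}\in A^*$ with $p=(A^*)_\s h+\alpha$, $\alpha\in\mul A^*$, and $\{p,h''\}\in A\subset BW$, so $\{Wp,h''\}\in B$; here $Wp=W(A^*)_\s h$ since $W\alpha=0$, and with the witness $u=(B^*)_\s h$ (which exists because $\dom A^*\subset\dom B^*$) one gets $WW^*u=W(A^*)_\s h=Wp$, whence $\{h,h''\}\in BWW^*B^*=BMB^*$. The chain $AA^*\subset BMB^*\subset RR^*$ from the common device, together with the selfadjointness of $AA^*$ and $RR^*$, gives $AA^*=RR^*$ and therefore $BMB^*=AA^*$, which is (ii).

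The step I expect to be the main obstacle is the inclusion $AA^*\subset BMB^*$ in (i) $\Rightarrow$ (ii). In the relation setting $A\subset BW$ is genuinely weaker than the operator equality $A=BW$ of the classical Douglas lemma, so one cannot simply substitute a factorization; the argument must kill the multivalued contribution $\alpha\in\mul A^*$ through $\ker W\supset\mul A^*$ and must produce the witness $u=(B^*)_\s h$, which requires $\dom A^*\subset\dom B^*$. This is exactly where the hypothesis $\dom A^*=\dom B^*$ is used, and without it one recovers only the inequality $AA^*\le c^2BB^*$ rather than the equality $AA^*=BMB^*$.
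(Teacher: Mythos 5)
Your proof is correct and follows essentially the same route as the paper: Theorem~\ref{een} produces $W$ with $A\subset BW$, the choice $M=WW^{*}$ coincides with the paper's $M=WQW^{*}$ once $W$ is extended by zero on $\mul A^{*}$, the domain hypothesis is used in exactly the same place (to upgrade $W^{*}B^{*}\subset A^{*}$ to the pointwise identity $W^{*}(B^{*})_{\s}=(A^{*})_{\s}$ on $\dom B^{*}$), and the converse direction via $BMB^{*}\subset (BM^{1/2})(BM^{1/2})^{*}\le \|M\|BB^{*}$ is the paper's argument verbatim. The only cosmetic differences are that you verify $AA^{*}\subset BMB^{*}$ element-wise and then sandwich it between the selfadjoint relations $AA^{*}$ and $RR^{*}$, where the paper instead writes the middle term as $(BWQ)(QW^{*}B^{*})=(BWQ)(BWQ)^{*}$ and invokes its selfadjointness directly.
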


\begin{proof}
(i) $\Rightarrow$ (ii) By Theorem \ref{een}
it follows that $A \subset BW$, and that
$W^{*}B^{*} \subset A^{*}$. Let $Q$ be the
orthogonal projection onto
$(\mul A^{*})^{\perp}$. Then clearly
\[
 QW^{*}B^{*}  \subset QA^{*},
\]
where $QA^{*}$ is an operator.
The assumption $\dom A^{*}=\dom B^{*}$ implies
that actually equality holds
\[
 QW^{*}B^{*} = QA^{*}.
\]
Therefore one obtains via 
$AA^{*}=AQA^{*}$, see Lemma \ref{T*T}, that
\[
 AA^{*}=AQA^{*} \subset BWQA^{*} 
 =BWQW^{*}B^{*}=(BWQ)(QW^{*}B^{*}),
\]
where the relation  $BWQ$ is closed and
\[
 (QW^{*}B^{*})^{*}=B(QW^{*})^{*}=BWQ.
\]
Hence the term $(BWQ)(QW^{*}B^{*})$ is selfadjoint and equality prevails:
\[
  AA^{*}=BWQW^{*}B^{*}=BMB^{*} \quad \mbox{with} \quad M=WQW^{*}.
\]
Note that $\|M\| \le \|W\|^{2} \le c^{2}$.

(ii) $\Rightarrow$ (i)
Since $M\ge 0$ is bounded one can rewrite (ii) in the form
\begin{equation}\label{tekla}
 AA^*=BMB^{*}= BM^{1/2} M^{1/2}B^{*} 
 \subset  (BM^{1/2}) \overline{M^{1/2}B^{*}}.
\end{equation}
Observe that by \eqref{Bbadj}
\[
 \overline{M^{1/2}B^{*}}=(M^{1/2}B^{*})^{**} =(BM^{1/2})^{*}.
\]
This equality and the fact that  $BM^{1/2}$ is closed together
show that both sides in \eqref{tekla} are selfadjoint; see Lemma \ref{T*T}.
Thus there is actually equality in \eqref{tekla}:
\[
AA^*=(BM^{1/2})\overline{M^{1/2}B^{*}}.
\]
Now Corollary \ref{cornew1+} implies that
\[
 AA^*= BM^{1/2}\overline{M^{1/2}B^{*}}\leq \|M\| BB^*,
\]
so that \eqref{AB} follows with $c^2=\|M\|$.
\end{proof}

\section{Domination of linear relations}

The following notions and terminology are strongly influenced by the
theory of Lebesgue type decompositions of linear relations and
forms, cf. \cite{HSS1}, \cite{HSS2}, \cite{STT2013}.
In fact in these papers the notion of
domination is used for (mostly closable) operators. However
domination can be defined also in the context of linear relations as
follows.

\begin{definition}\label{domi}
Let $\sH_A$, $\sH_B$, and $\sH$ be Hilbert spaces, let $A$ be a
linear relation from $\sH$ to $\sH_A$, and let $B$ be a  linear
relation from $\sH$ to $\sH_B$. Then $B$ dominates $A$ if there
exists an operator $Z \in \mathbf{B}(\sH_B, \sH_A)$ such that
\begin{equation}\label{dom}
 ZB \subset A.
\end{equation}
\end{definition}

Note that the inclusion  $ZB \subset A$ in \eqref{dom} means that
\begin{equation}\label{dom+}
\{ \{f,Zf'\}:\, \{f,f'\} \in B\} \subset A.
\end{equation}
This shows that $\dom B \subset \dom A$ and that $\ker B \subset
\ker A$. Furthermore,
\[
 \mul ZB= Z(\mul B) \subset \mul A.
\]
It follows from the definition that $Z$ takes $\ran B$ into $\ran
A$; the boundedness implies that $Z$ takes $\cran B$ into $\cran A$.
Hence one can assume that $(\ran B)^\perp \subset \ker Z$, in which
case $Z$ is uniquely determined. Domination is transitive: if
$Z_1B\subset A$ and $Z_2C\subset B$ then
\[
 Z_1(Z_2C)\subset Z_1 B\subset A,
\]
so that $(Z_1Z_2)C\subset A$.

Let $A$ and $B$ be relations in a Hilbert space $\sH$
which satisfy $B \subset A$.
Then clearly $B$ dominates $A$ (with $Z=1$). In particular, since
$A \subset A^{**}$, it follows  that $A$ dominates $A^{**}$. \\

In the particular case when $A$ and $B$ in the above definition are
linear operators it is possible to give an equivalent
characterization of domination.

\begin{lemma}\label{domlemma}
Let $\sH_A$, $\sH_B$, and $\sH$ be Hilbert spaces,
let $A$ be a  linear operator from $\sH$ to $\sH_A$,
and let $B$ be a  linear operator from $\sH$ to $\sH_B$.
Then $B$ dominates $A$ if and only if there exists $c \ge 0$ such that
 \begin{equation}\label{dominate}
 \dom B \subset \dom A \quad \text{and}
  \quad \|A f\|\le c\|B f\|,
 \quad f\in \dom B.
\end{equation}
\end{lemma}

\begin{proof}
Assume that $B$ dominates $A$.  Then  \eqref{dom}
shows that $\dom B \subset \dom A$
and that for all $f \in \dom B$ one has $ZBf=Af$, which leads to
 \[
\|A f\| \le \|Z\|\|B f\|, \quad f \in \dom B.
\]
The desired result follows from this with $c=\|Z\|$.

Conversely, assume that \eqref{dominate} holds. Define an operator
$Z_0$ from $\ran B$ to $\ran A$ by $Z_0Bf=Af$, $f\in \dom B$. It
follows from \eqref{dominate} that the operator $Z_0$ is well
defined and bounded with $\|Z_0\|\le c$. Thus $Z_0$ can be continued
to a bounded operator from $\cran B$ to $\cran A$ with the same
norm. Let $Z$ be the extension of $\clos Z_0$ obtained by defining
$Z$ to be $0$ on $(\ran B)^\perp$. Then clearly $Z:\sH_B\to \sH_A$
is bounded and $ZB\subset A$ holds.
\end{proof}

A weaker version of Lemma \ref{domlemma} with densely defined
operators on a Banach space appears in \cite[Theorem~2.8]{F}; see
also \cite{BB,E}.

\begin{lemma}\label{blabla}
Let the relation $B$ dominate the relation  $A$  as in \eqref{dom}, then
\begin{equation}\label{dom1}
 A^{*} \subset B^{*} Z^{*},
\end{equation}
and, consequently
\begin{equation}\label{dom2}
 ZB^{**} \subset A^{**}.
\end{equation}
In other words, $B^{**}$ dominates $A^{**}$
with the same operator $Z$. In particular, if $B$ dominates $A$
then the following inclusions are valid
\[
\dom B\subset \dom A, \quad \ran A^*\subset \ran B^*,
\quad \mbox{and} \quad \dom B^{**}\subset \dom A^{**}.
\]
\end{lemma}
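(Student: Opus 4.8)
The plan is to proceed exactly as in the proof of Lemma \ref{lemma1}, using the hypothesis $ZB \subset A$ together with the product--adjoint relations \eqref{Bbadj0} and \eqref{Bbadj} and the fact that $Z \in \mathbf{B}(\sH_B,\sH_A)$ is bounded and everywhere defined (so that $Z^*$ is bounded and $Z^{**}=Z$). Throughout I would use that the adjoint reverses inclusions: if $H_1 \subset H_2$ then $H_2^* \subset H_1^*$, which is immediate from the definition $H^*=(JH)^\perp$. The only real bookkeeping is to decide at each step whether one invokes the general inclusion \eqref{Bbadj0} or the equality \eqref{Bbadj}; boundedness of $Z$ is precisely what makes the relevant passages to adjoints behave well.

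First I would take the adjoint of $ZB \subset A$, obtaining $A^* \subset (ZB)^*$. Because $Z \in \mathbf{B}(\sH_B,\sH_A)$, the equality \eqref{Bbadj} applies to the product $ZB$ (with $H_2=Z$ and $H_1=B$), giving $(ZB)^* = B^*Z^*$. Combining the two statements yields
\[
 A^* \subset (ZB)^* = B^*Z^*,
\]
which is exactly \eqref{dom1}.

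Next I would take the adjoint of \eqref{dom1}. Since adjoints reverse inclusions, $A^* \subset B^*Z^*$ gives $(B^*Z^*)^* \subset A^{**}$. To bound the left-hand side from below I would apply the general inclusion \eqref{Bbadj0} to the product $B^*Z^*$ (now with $H_1=Z^*$ and $H_2=B^*$), which gives $(Z^*)^*(B^*)^* = Z^{**}B^{**} \subset (B^*Z^*)^*$. As $Z$ is bounded, $Z^{**}=Z$, so
\[
 ZB^{**} \subset (B^*Z^*)^* \subset A^{**},
\]
which is \eqref{dom2}; equivalently, $B^{**}$ dominates $A^{**}$ with the same operator $Z$. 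The three final inclusions then follow at once: $\dom B \subset \dom A$ is immediate from $ZB\subset A$ (as already recorded below \eqref{dom+}); $\ran A^* \subset \ran B^*$ follows from \eqref{dom1}, since the range of the product $B^*Z^*$ is contained in $\ran B^*$; and $\dom B^{**}\subset \dom A^{**}$ follows from \eqref{dom2}, because $Z$ everywhere defined gives $\dom(ZB^{**})=\dom B^{**}$, while $ZB^{**}\subset A^{**}$ forces $\dom(ZB^{**})\subset \dom A^{**}$.

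I do not expect a genuine obstacle here, since the argument is essentially dual to Lemma \ref{lemma1}. The one point that must be handled with care is tracking the direction of each inclusion, namely the inclusion in \eqref{Bbadj0} versus the equality in \eqref{Bbadj}, together with the repeated use of the boundedness of $Z$ to guarantee both $Z^{**}=Z$ and the fullness of $\dom Z$ in the last step.
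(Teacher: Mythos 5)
Your proof is correct and follows essentially the same route as the paper: take adjoints of $ZB\subset A$ and use the equality \eqref{Bbadj} (valid since $Z$ is bounded) to get $A^*\subset(ZB)^*=B^*Z^*$, then take adjoints again and use the general inclusion \eqref{Bbadj0} together with $Z^{**}=Z$ to obtain $ZB^{**}\subset(B^*Z^*)^*\subset A^{**}$. The paper leaves the three final inclusions as ``clear''; your explicit justifications of them are accurate.
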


\begin{proof}
It follows from \eqref{dom} and \eqref{Bbadj} that
\[
 A^* \subset (ZB)^*=B^*Z^*.
\]
Now taking adjoints again yields
\[
 Z^{**}B^{**} \subset (B^*Z^*)^* \subset A^{**},
\]
and this proves \eqref{dom2}.
The remaining statement are clear from \eqref{dom1} and \eqref{dom2} .
\end{proof}

So far domination has been defined for linear relations
which are not necessarily closed.
Due to Lemma \ref{blabla} domination of
closed linear relations can be characterized in terms of majorization.

\begin{theorem}\label{een+}
Let $\sH_A$, $\sH_B$, and $\sH$ be Hilbert spaces,
let $A$ be a closed linear relation from $\sH$ to $\sH_A$,
and let $B$ be a  closed linear relation from $\sH$ to $\sH_B$. Then
there exists an operator $Z \in \mathbf{B}(\sH_B, \sH_A)$
such that
\begin{equation}\label{AABC+}
 ZB \subset A
\end{equation}
if and only if there exists $c \ge 0$ such that
\begin{equation}\label{AAB}
 A^{*}A \leq c^{2} B^{*}B.
\end{equation}
One can take $\|Z\| \le c$.
\end{theorem}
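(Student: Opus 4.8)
The plan is to reduce Theorem \ref{een+} to the already established Theorem \ref{een} by passing to adjoints. Since $A$ and $B$ are closed, one has $A^{**}=A$ and $B^{**}=B$, while $A^*$ is a closed relation from $\sH_A$ to $\sH$ and $B^*$ is a closed relation from $\sH_B$ to $\sH$. Thus the pair $(A^*,B^*)$ fits exactly the hypotheses of Theorem \ref{een}, with $\sH_A$ and $\sH_B$ as source spaces and $\sH$ as common target. Applying Theorem \ref{een} to this pair, the existence of an operator $W\in\mathbf{B}(\sH_A,\sH_B)$ with $A^*\subset B^*W$ is equivalent to the inequality $A^*(A^*)^*\le c^2 B^*(B^*)^*$ for some $c\ge 0$, and one can take $\|W\|\le c$. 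Using closedness to rewrite $(A^*)^*=A$ and $(B^*)^*=B$, this inequality is precisely \eqref{AAB}. In this way the entire analytic content — the construction of a bounded factorizing operator out of a form inequality — is inherited from Theorem \ref{een}, and what remains is only to translate the factorization inclusion $A^*\subset B^*W$ into the domination inclusion \eqref{AABC+}.

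For the direction ($\Leftarrow$) I would start from \eqref{AAB} and use Theorem \ref{een} to obtain $W\in\mathbf{B}(\sH_A,\sH_B)$ with $A^*\subset B^*W$ and $\|W\|\le c$. Applying Lemma \ref{lemma1} to $A^*$ and $B^*$ in the roles of $A$ and $B$, the inclusion $A^*\subset B^*W$ yields $W^*B^{**}\subset A^{**}$, that is $W^*B\subset A$ by closedness. Setting $Z=W^*\in\mathbf{B}(\sH_B,\sH_A)$ then gives $ZB\subset A$ with $\|Z\|=\|W\|\le c$, which is exactly \eqref{AABC+} together with the asserted norm bound.

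For the direction ($\Rightarrow$) I would start from $ZB\subset A$ with $Z\in\mathbf{B}(\sH_B,\sH_A)$ and invoke Lemma \ref{blabla}, which produces $A^*\subset B^*Z^*$ with $Z^*\in\mathbf{B}(\sH_A,\sH_B)$. Taking $W=Z^*$, the equivalence in Theorem \ref{een} applied to $(A^*,B^*)$ converts this factorization inclusion back into the inequality $A^*(A^*)^*\le c^2 B^*(B^*)^*$, i.e. \eqref{AAB}, for some $c\ge 0$. This closes the loop.

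The proof is essentially bookkeeping: the only thing to watch is the systematic interchange of the roles of $A,B$ with their adjoints, keeping track of which Hilbert space sits where, and the repeated use of closedness to collapse the double adjoints $A^{**}=A$, $B^{**}=B$. I expect no genuine obstacle once Theorem \ref{een}, Lemma \ref{lemma1}, and Lemma \ref{blabla} are in hand; the main risk is a clerical slip in the adjoint calculus — in particular, one must verify the chain $\|Z\|=\|W^*\|=\|W\|\le c$ carefully in order to preserve the sharp norm bound $\|Z\|\le c$.
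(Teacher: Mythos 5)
Your proposal is correct and follows essentially the same route as the paper: the authors also prove Theorem \ref{een+} by observing that, for closed $A$ and $B$, the inclusion $ZB\subset A$ is equivalent to $A^*\subset B^*Z^*$ (via Lemma \ref{blabla} and Lemma \ref{lemma1}) and then invoking Theorem \ref{een} for the pair $(A^*,B^*)$. Your version merely spells out the adjoint bookkeeping and the norm chain $\|Z\|=\|W^*\|=\|W\|\le c$ that the paper leaves implicit.
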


\begin{proof}
Since $A$ and $B$ are assumed to be closed the inclusions
\eqref{AABC+} and \eqref{dom1} are equivalent. Hence the result
follows from Theorem \ref{een}.
\end{proof}
 
\begin{proposition}\label{Dgbounded}
Let $\sH_A$, $\sH_B$, and $\sH$ be Hilbert spaces, let $A$ be a
closed linear relation from $\sH$ to $\sH_A$, and let $B$ be a
closed linear relation from $\sH$ to $\sH_B$. Then there exists an
operator $Z \in \mathbf{B}(\sH_B, \sH_A)$ such that
\begin{equation}\label{AequalZB}
  A= ZB
\end{equation}
if and only if the following three conditions are satisfied:
\begin{enumerate}\def\labelenumi{\rm (\roman{enumi})}
\item the inequality \eqref{AAB} holds for some $c \ge 0$;
\item $\dom A=\dom B$;
\item $\dim(\mul A)\le \dim(\mul B)$.
\end{enumerate}
\end{proposition}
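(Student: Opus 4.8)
The plan is to reduce the equality $A=ZB$ to the inclusion $ZB\subset A$ together with the two boundary conditions of Corollary~\ref{arens1} --- equality of domains and of multivalued parts --- and then to build a suitable $Z$ from the operator furnished by Theorem~\ref{een+}. For the forward implication, if $A=ZB$ with $Z\in\mathbf{B}(\sH_B,\sH_A)$ then $ZB\subset A$, so (i) is immediate from Theorem~\ref{een+}. Since $Z$ is everywhere defined and single-valued, $\dom ZB=\dom B$, giving $\dom A=\dom B$, which is (ii); and \eqref{mulAB} yields $\mul A=\mul ZB=Z(\mul B)$, which, as the image of $\mul B$ under a bounded operator, is contained in the closed span of the images of an orthonormal basis of $\mul B$, whence $\dim(\mul A)\le\dim(\mul B)$, which is (iii).

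For the converse I would start from (i) and Theorem~\ref{een+} to obtain $Z_0\in\mathbf{B}(\sH_B,\sH_A)$ with $Z_0B\subset A$. Writing $B=B_\s\hoplus(\{0\}\times\mul B)$ as in \eqref{operpart}, this inclusion says precisely that $\{f,Z_0B_\s f\}\in A$ for every $f\in\dom B$ and that $Z_0(\mul B)\subset\mul A$. By (ii), $\dom Z_0B=\dom B=\dom A$, so Proposition~\ref{arens0} shows that $Z_0B$ and $A$ can differ only through their multivalued parts; the remaining task is to adjust $Z_0$ on $\mul B$ so that its image there becomes all of $\mul A$ while keeping the inclusion into $A$.

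Here condition (iii) enters. Since $\dim(\mul A)\le\dim(\mul B)$ and both are closed subspaces, there is a bounded surjection $V_0$ of $\mul B$ onto $\mul A$, for instance the adjoint of an isometric embedding $\mul A\hookrightarrow\mul B$. Letting $P$ be the orthogonal projection of $\sH_B$ onto $\mul B$, I would set
\[
 Z=Z_0(I-P)+V_0P\in\mathbf{B}(\sH_B,\sH_A).
\]
Since $\ran B_\s\subset(\mul B)^\perp$, the operator $Z$ coincides with $Z_0$ on $\ran B_\s$, so $\{f,ZB_\s f\}=\{f,Z_0B_\s f\}\in A$, whereas on $\mul B$ one has $Z\psi=V_0\psi\in\mul A$; hence $ZB\subset A$, now with $\dom ZB=\dom A$ and $\mul ZB=Z(\mul B)=V_0(\mul B)=\mul A$, so that Corollary~\ref{arens1} delivers $A=ZB$. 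The one delicate point is this final modification: one must check that passing from $Z_0$ to $Z$ enlarges the image of the multivalued part up to all of $\mul A$ without spoiling the inclusion, and this works precisely because the orthogonal splitting $\sH_B=(\mul B)^\perp\oplus\mul B$ confines the correction $V_0P$ to $\mul B$, where its values lie in $\mul A$ and hence cause no harm, leaving untouched the operator-part condition that $Z_0$ already satisfies on $\ran B_\s$.
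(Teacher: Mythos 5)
Your proof is correct and follows essentially the same route as the paper's: condition (i) is handled by Theorem~\ref{een+}, condition (ii) forces equality on the operator part via the domain criterion, and condition (iii) supplies a bounded surjection onto $\mul A$. The only (harmless) difference is the order of operations --- the paper first decomposes $A$ and $B$ into operator and multivalued parts and applies Theorem~\ref{een+} to $A_{\rm s}$ and $B_{\rm s}$ (using Lemma~\ref{T*T} to pass to $(A_{\rm s})^*A_{\rm s}\le c^2 (B_{\rm s})^*B_{\rm s}$), obtaining $Z=Z_0\oplus Z_m$ directly, whereas you apply Theorem~\ref{een+} to $A$ and $B$ themselves and then correct the resulting $Z_0$ on $\mul B$; both constructions produce the same kind of operator.
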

\begin{proof}
($\Rightarrow$) Property (i) follows directly from Theorem
\ref{een+}. Since $\dom Z =\sH_B$, the equality \eqref{AequalZB}
implies (ii). Finally, it follows from \eqref{mulAB} and
\eqref{AequalZB} that $\mul A=\mul ZB=Z(\mul B)$, i.e. $Z$ maps
$\mul B$ onto $\mul A$, and hence (iii) holds.

($\Leftarrow$) Decompose $A$ and $B$ via their operator parts:
\[
 A=A_{\rm s} \hoplus A_{\rm mul}, \quad
 B=B_{\rm s} \hoplus B_{\rm mul}.
\]
By Lemma \ref{T*T} the condition \eqref{AAB} is equivalent to
$(A_{\rm s})^{*}A_{\rm s} \le c^{2} (B_{\rm s})^{*}B_{\rm s}$, $c\ge
0$. Now by Theorem \ref{een+} there exists $Z_0\in
\mathbf{B}(\sH_B\ominus \mul B, \sH_A\ominus\mul A)$ such that
\[
 Z_0B_{\rm s}\subset A_{\rm s}.
\]
By the condition (ii) $\dom A_{\rm s}=\dom B_{\rm s}$ and hence, in
fact, the equality $Z_0B_{\rm s} = A_{\rm s}$ prevails. Moreover,
the condition (iii) guarantees the existence of a surjective
operator $Z_m\in \mathbf{B}(\mul B,\mul A)$. Finally, by taking
$Z=Z_0\oplus Z_m$ one gets the desired identity $ZB=A$.
\end{proof}

Finally note that the result in Proposition \ref{NEW} has a counterpart
in the setting of Theorem \ref{een+}.
Let $\sH_A$, $\sH_B$, and $\sH$ be Hilbert spaces, let $A$ be a
closed linear relation from $\sH$ to $\sH_A$, and let $B$ be a
closed linear relation from $\sH$ to $\sH_B$.
If $\dom A=\dom B$, then the following statements are equivalent:
\begin{enumerate}\def\labelenumi{\rm (\roman{enumi})}
\item $A^{*}A \le c^{2} B^{*}B$, \,\, $c\ge 0$;
\item $A^*A= B^*MB$ for some 
$0\leq M \in \mathbf{B}(\sH_B)$ with $\|M\|\leq c^2$.
\end{enumerate}

\section{Majorization and domination} 

There is a direct connection between the majorization 
of bounded operators  as in the original Douglas lemma  
and the notion of domination of linear relations 
as in Definition \ref{domi}. 

\begin{lemma}\label{douglasfact}
Let $\sH_A$, $\sH_B$, and $\sH$ be Hilbert spaces,
let $A\in \mathbf{B}(\sH_A,\sH)$, $B\in \mathbf{B}(\sH_B,\sH)$, 
and $W\in \mathbf{B}(\sH_A,\sH_B)$. Then
\begin{equation}\label{A=BW}
 A=BW  \quad \Leftrightarrow \quad WA^{-1} \subset B^{-1}.
\end{equation}
\end{lemma}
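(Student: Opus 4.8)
The plan is to prove both implications at once by computing the product relation $WA^{-1}$ directly from the definition \eqref{prod} and then comparing it element by element with $B^{-1}$. The whole argument rests on the standing hypothesis that $A$, $B$, and $W$ are everywhere defined bounded operators, so that $A^{-1}$, $W$, and $B^{-1}$ are all single-valued on the relevant first components and all three domains are full.

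First I would unwind $WA^{-1}$. Here $A^{-1}$ plays the role of the inner factor, so $\{f,\varphi\}\in A^{-1}$ means $\{\varphi,f\}\in A$, i.e. $f=A\varphi$, while $\{\varphi,f'\}\in W$ means $f'=W\varphi$. Since $\varphi$ must lie in $\ran A^{-1}\cap\dom W=\dom A\cap\dom W=\sH_A$, the definition \eqref{prod} yields
\[
 WA^{-1}=\{\{A\varphi,W\varphi\}:\,\varphi\in\sH_A\}.
\]
Similarly, because $B$ is everywhere defined,
\[
 B^{-1}=\{\{Bb,b\}:\,b\in\sH_B\}.
\]

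Next I would read off the inclusion $WA^{-1}\subset B^{-1}$ from these two descriptions. It holds precisely when $\{A\varphi,W\varphi\}\in B^{-1}$ for every $\varphi\in\sH_A$, and since $B$ is an operator this is in turn equivalent to $\{W\varphi,A\varphi\}\in B$, that is, $BW\varphi=A\varphi$. Hence $WA^{-1}\subset B^{-1}$ holds if and only if $A\varphi=BW\varphi$ for all $\varphi\in\sH_A$, which is exactly the operator identity $A=BW$. This delivers both directions of \eqref{A=BW} simultaneously.

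I do not expect a genuine obstacle here: the entire content is that, for bounded everywhere defined $A$, $B$, $W$, passing to formal inverses converts the factorization $A=BW$ into a graph containment. The only point needing a little care is to verify that the inclusion of relations really captures the equation for \emph{every} $\varphi$, with nothing lost; but this is automatic, since $\dom A=\sH_A$ forces the intermediate variable $\varphi$ to range over all of $\sH_A$, so the first components $A\varphi$ exhaust $\ran A$ and the inclusion is as strong as the pointwise identity.
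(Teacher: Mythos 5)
Your proof is correct, and it takes a genuinely different route from the paper's. You compute the graphs explicitly: since $A$, $B$, $W$ are everywhere defined single-valued operators, $WA^{-1}=\{\{A\varphi,W\varphi\}:\varphi\in\sH_A\}$ and $B^{-1}=\{\{Bb,b\}:b\in\sH_B\}$, and the inclusion of the former in the latter is read off pointwise as $BW\varphi=A\varphi$ for all $\varphi$, giving both implications at once. The paper instead argues purely algebraically with the composition identities $HH^{-1}=I_{\ran H}\subset I$ and $H^{-1}H\supset I$ from \eqref{BBinv}: for ($\Rightarrow$) it writes $WA^{-1}\subset B^{-1}BWA^{-1}=B^{-1}AA^{-1}\subset B^{-1}$, and for ($\Leftarrow$) it writes $BW\subset BWA^{-1}A\subset BB^{-1}A\subset A$ and then upgrades the inclusion to equality because both sides are everywhere defined operators (Corollary \ref{arens2}). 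Your element-by-element argument is more elementary and makes transparent exactly where each hypothesis ($\dom A=\sH_A$, $\dom B=\sH_B$, single-valuedness of $B$) is used; the paper's calculus-of-relations argument is the style that propagates more readily to settings where the factors are genuine relations rather than everywhere defined operators, which is in keeping with the rest of the paper. Both are complete proofs.
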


\begin{proof}
First observe that if $H\in\mathbf{B}(\sH_1,\sH_2)$, then
\begin{equation}\label{haha}
 HH^{-1}=I_{\ran H}\subset I_{\sH_{2}},
 \quad  
 H^{-1} H=I_{\sH_{1}}\,{\widehat+}\,\bigl(\{0\}\times\ker H\bigr)\supset I_{\sH_{1}},
\end{equation}
as is clear from \eqref{BBinv} and \eqref{BBinv+}. 

($\Rightarrow$) Assume that  $A=BW$. Then by \eqref{haha} it follows that
\[
 WA^{-1}\subset B^{-1}BWA^{-1} = B^{-1}AA^{-1}\subset B^{-1}.
\]

($\Leftarrow$) Assume that  $WA^{-1} \subset B^{-1}$. 
Then by \eqref{haha} it follows that
\[
  BW \subset BWA^{-1}A \subset BB^{-1}A \subset A,
\]
so that $BW \subset A$.  Actually equality
$BW=A$ prevails here, since both $BW$ and $A$ are 
everywhere defined operators.
\end{proof}

In other words, the lemma expresses the fact that when 
$A$ and $B$ are bounded operators, 
then $B$ majorizes $A$ in the sense of 
$AA^{*} \le \lambda BB^{*}$ (cf. Lemma \ref{Douglas}) 
if and only if the relation $A^{-1}$ dominates the relation $B^{-1}$
in the sense of Definition \ref{domi}

The connection in Lemma \ref{douglasfact} is useful as it yields
a particularly simple proof for the characterization of the ordering 
of nonnegative selfadjoint relations as in \eqref{ineq1}.
For earlier treatments of the ordering, see \cite{CS,HSSW}.

\begin{theorem}\label{twee}
Let $H_1$ and $H_2$ be nonnegative selfadjoint relations in a Hilbert space $\sH$.
Then the following statements are equivalent:
\begin{enumerate}\def\labelenumi{\rm (\roman{enumi})}
\item $H_1 \leq H_2$;
\item $(H_1+x)^{-1} \geq (H_2+x)^{-1}$ for some and hence for every $x>0$;  
\item $H_1^{-1} \geq H_2^{-1}$.
\end{enumerate}
\end{theorem}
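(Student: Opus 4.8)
The plan is to establish the two equivalences (i) $\Leftrightarrow$ (ii) and (ii) $\Leftrightarrow$ (iii), in each case as an equivalence valid for \emph{every} fixed $x>0$; since (i) and (iii) do not involve $x$, this automatically yields the ``some and hence every'' clause in (ii). First I would record the structural facts used throughout. For a nonnegative selfadjoint relation $H$ and $x>0$ the shifted relation $H+x=(H_\s+x)\hoplus(\{0\}\times\mul H)$ is bounded below by $x$, so $\ran(H+x)=\sH$ and $\ker(H+x)=\{0\}$; hence $S:=(H+x)^{-1}$ is an everywhere defined bounded nonnegative selfadjoint operator on $\sH$. Using the remark following \eqref{ineq1} (based on \eqref{hs1}--\eqref{hs2}) the ordering in (i) may be replaced by $H_1+x\le H_2+x$, so throughout I write $G_i=H_i+x$ and $S_i=G_i^{-1}\in\mathbf{B}(\sH)$.

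Next I would prove (i) $\Leftrightarrow$ (ii), where (ii) reads $S_1\ge S_2$. Apply the Douglas lemma (Lemma \ref{Douglas}) to the everywhere defined bounded operators $S_2^{\half},S_1^{\half}\in\mathbf{B}(\sH)$; since these are selfadjoint, $S_2^{\half}(S_2^{\half})^*=S_2$ and $S_1^{\half}(S_1^{\half})^*=S_1$, whence
\[
 S_2\le S_1 \quad\Leftrightarrow\quad S_2^{\half}=S_1^{\half}W \ \text{ for some } W\in\mathbf{B}(\sH),\ \|W\|\le 1.
\]
By Lemma \ref{douglasfact} the factorization on the right is equivalent to $W(S_2^{\half})^{-1}\subset(S_1^{\half})^{-1}$. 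Now $(S_i^{\half})^{-1}=G_i^{\half}$ by the uniqueness of the square root in Lemma \ref{sqrt}, so this inclusion says exactly that $G_2^{\half}$ dominates $G_1^{\half}$ in the sense of Definition \ref{domi}. Since $G_i^{\half}$ is selfadjoint, $(G_i^{\half})^*G_i^{\half}=G_i$, and Theorem \ref{een+} converts $WG_2^{\half}\subset G_1^{\half}$ with $\|W\|\le1$ into $G_1\le G_2$. Undoing the shift gives $H_1\le H_2$, i.e. (i); every step is a biconditional, so (i) $\Leftrightarrow$ (ii).

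It remains to link (ii) and (iii), and here I would apply the equivalence just proved to the inverse pair together with a resolvent identity. A direct computation at the level of relations gives, for $y>0$,
\[
 (H^{-1}+y)^{-1}=\frac{1}{y}\,I-\frac{1}{y^{2}}\,\bigl(H+\tfrac{1}{y}\bigr)^{-1}.
\]
Applying (i) $\Leftrightarrow$ (ii) to the nonnegative selfadjoint relations $H_2^{-1}\le H_1^{-1}$ yields $H_2^{-1}\le H_1^{-1}$ $\Leftrightarrow$ $(H_2^{-1}+y)^{-1}\ge(H_1^{-1}+y)^{-1}$ for every $y>0$; substituting the identity, the additive term $\frac1y I$ and the factor $\frac1{y^{2}}$ cancel and the inequality collapses to $(H_1+\tfrac1y)^{-1}\ge(H_2+\tfrac1y)^{-1}$, which is (ii) with $x=1/y$. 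Thus (iii) $\Leftrightarrow$ (ii), closing the cycle.

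The main obstacle is not conceptual but bookkeeping at the relation level: verifying that $(H+x)^{-1}$ is genuinely a bounded everywhere defined operator (so that the Douglas lemma and Lemma \ref{douglasfact} apply), that $(S^{\half})^{-1}=(S^{-1})^{\half}$ by the uniqueness in Lemma \ref{sqrt}, and that the resolvent identity above holds as an identity of relations, including the multivalued parts. Each is routine once the operator-part decomposition \eqref{operpart} is invoked, but these are precisely the points where care with $\mul$ and $\ker$ is required.
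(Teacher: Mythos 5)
Your proposal is correct and follows essentially the same route as the paper: the equivalence (i) $\Leftrightarrow$ (ii) is obtained by chaining Theorem \ref{een+} (equivalently Corollary \ref{repr+}), Lemma \ref{douglasfact}, and the Douglas lemma/Theorem \ref{een} applied to the square roots of the shifted relations, and (ii) $\Leftrightarrow$ (iii) follows from the same resolvent identity $(H+x)^{-1}=\frac1x-\frac1{x^2}(H^{-1}+\frac1x)^{-1}$ applied to the pair $H_1^{-1}$, $H_2^{-1}$. The only difference is cosmetic: you traverse the chain of equivalences starting from (ii) rather than from (i), and you make explicit the identification $((H+x)^{-1})^{\half}=((H+x)^{\half})^{-1}$, which the paper uses tacitly.
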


\begin{proof}
(i) $\Leftrightarrow$ (ii) 
Recall that $H_{1} \leq H_{2}$ if and only if for some  
(and hence for all) $x >0$
\[ 
H_1+x \leq H_2+x,
\]
and note that for $x>0$  the inverses $(H_1+x)^{-1}$ and $(H_2+x)^{-1}$ 
belong to $\mathbf{B}(\sH)$.
By Theorem \ref{een+} 
$H_1+x \leq H_2+x$ is equivalent to the existence of $Z \in
\mathbf{B}(\sH)$ such that
\begin{equation}\label{incl0}
 Z(H_2+x)^{1/2} \subset (H_1+x)^{1/2},\quad \|Z\|\leq 1;
\end{equation}
cf. Corollary \ref{repr+}. Now an application of Lemma \ref{douglasfact}
shows that \eqref{incl0} is equivalent to 
\begin{equation}\label{incl1}
 (H_2+x)^{-1/2}= (H_1+x)^{-1/2}Z.
\end{equation}
Finally, by Lemma~\ref{Douglas} (or Theorem \ref{een})
\eqref{incl1} is equivalent to  
\[
 (H_2+x)^{-1/2}(H_2+x)^{-1/2}\leq (H_1+x)^{-1/2}(H_1+x)^{-1/2},
\]
since $\|Z\|\leq 1$. 

(ii) $\Leftrightarrow$ (iii) Let $H$ be a nonnegative
selfadjoint relation.  Then clearly also $H^{-1}$ is a nonnegative
selfadjoint relation and it is connected to $H$ via
\begin{equation}\label{eq0}
\left(H+x\right)^{-1}=\frac{1}{x}-\frac{1}{x^2}
\left(H^{-1}+\frac{1}{x}\right)^{-1},
\end{equation}
where $x >0$. Hence for a pair of nonnegative selfadjoint relations
$H_1$ and $H_2$ one obtains for each $x >0$:
\[
 (H_2+x)^{-1} - (H_1+x)^{-1}
 =\frac{1}{x^2} \left[ \left(H_1^{-1}+\frac{1}{x}\right)^{-1}-\left(H_2^{-1}+\frac{1}{x}\right)^{-1} \right].
\]
Now the equivalence is obtained from (i) $\Leftrightarrow$ (ii).
\end{proof}
 
\textbf{Acknowledgement.} The first author is grateful for the
support from the Emil Aaltonen Foundation.

\end{document}